\newtheorem{theorem}{Theorem}[section]
\newtheorem{lemma}[theorem]{Lemma}
\newtheorem{corollary}[theorem]{Corollary}
\newtheorem{proposition}[theorem]{Proposition}
\newtheorem{definition-proposition}[theorem]{Definition-Proposition}
\theoremstyle{definition}
\newtheorem{definition}[theorem]{Definition}
\newtheorem{remark}[theorem]{Remark}
\DeclareMathOperator{\add}{\mathsf{add}}
\DeclareMathOperator{\Filt}{\mathsf{Filt}}
\DeclareMathOperator{\Gen}{\mathsf{Gen}}
\DeclareMathOperator{\FiltGen}{\mathsf{T}}
\newcommand{\J}{\mathsf J}
\newcommand{\T}{\mathcal T}
\newcommand{\F}{\mathcal F}
\newcommand{\W}{\mathcal W}
\newcommand{\X}{\mathcal X}
\newcommand{\Y}{\mathcal Y}
\DeclareMathOperator{\mods}{\mathsf{mod}}
\DeclareMathOperator{\Hom}{\mathrm{Hom}}
\DeclareMathOperator{\Ext}{\mathrm{Ext}}
\DeclareMathOperator{\ind}{\mathrm{ind}}
\renewcommand{\P}{\mathcal{P}}
\DeclareMathOperator{\Ps}{\mathcal{P}_\mathrm{s}}
\DeclareMathOperator{\Pns}{\mathcal{P}_\mathrm{ns}}
\newcommand{\Pmod}{P}
\newcommand{\Pmods}{P_\mathrm{s}}
\newcommand{\Pmodns}{P_\mathrm{ns}}
\newcommand{\Ebm}{\mathsf E}
\newcommand{\Fbm}{\mathsf{E}^{-1}}
\newcommand{\V}{\mathcal V}
\newcommand{\SC}{\mathcal S}
\newcommand{\tes}{\tau\text{-}\mathrm{es}}
\newcommand{\tf}{\mathrm{tf}}
\title{Transitivity of mutation of $\tau$-exceptional sequences in the $\tau$-tilting finite case}
\author{Aslak B. Buan}
\address{Department of Mathematical Sciences, Norwegian University of Science and Technology (NTNU), 7491 Trondheim, NORWAY}
\email{aslak.buan@ntnu.no}
\author{Eric J. Hanson}
\address{Department of Mathematics, North Carolina State University, Raleigh, NC 27695, USA}
\email{ejhanso3@ncsu.edu}
\author{Bethany R. Marsh}
\address{School of Mathematics, University of Leeds, Leeds LS2 9JT, U.K.}
\email{B.R.Marsh@leeds.ac.uk}
\thanks{ABB was supported by 
 grant number FRINAT 301375 from the Norwegian Research Council. EJH is supported by an AMS-Simons travel grant. This work was supported by the Engineering and Physical Sciences Research Council [grant number EP/W007509/1].
 ABB would like to thank BRM and the School of Mathematics at the University of Leeds for their warm hospitality. EJH and BRM would also like to thank ABB and the Department of Mathematical Sciences at NTNU for their warm hopitality.}
\subjclass{16D90, 16G10, 16G20, 16S90}
\begin{document}
\begin{abstract}
We prove that mutation of complete $\tau$-exceptional sequences is transitive for $\tau$-tilting finite algebras.
\end{abstract}
\maketitle

\vspace{-11pt}

\section*{Introduction}

Inspired by $\tau$-tilting theory 
\cite{air} and, in particular,
$\tau$-tilting reduction \cite{jasso},
$\tau$-exceptional sequences were introduced in \cite{bm}. 
These are certain finite sequences of indecomposable objects in the category of finite-dimensional modules over an arbitrary fixed finite-dimensional algebra. The sequences have  
properties similar to classical exceptional sequences over hereditary algebras
\cite{cb, rin}. In particular, the rank  of the algebra (that is, the number of isomorphism classes of simple modules) gives an upper bound of the length of a $\tau$-exceptional sequence. 

Exceptional sequences were first studied in a triangulated setting, where they come with a mutation operation that induces a braid group action on the set of such sequences; see \cite{bon, goru}. This specializes to an operation on exceptional sequences in the module category of a hereditary 
(finite-dimensional) algebra. Crawley-Boevey \cite{cb} and Ringel \cite{rin} proved that this mutation operation is transitive in the hereditary case.

In \cite{bhm}, a mutation operation on $\tau$-exceptional sequences was introduced and proved to 
generalize the hereditary case. But, for arbitrary algebras, the operation does not respect the braid relations. Furthermore, it is not, in general, transitive on complete $\tau$-exceptional sequences.
However, it follows from \cite[Thm.~0.6]{bhm} that the mutation operation is transitive for algebras that are $\tau$-tilting finite and have
rank two. (Recall from \cite{dij} that an algebra
is $\tau$-tilting finite if there
are only finitely many isomorphism classes of basic support $\tau$-rigid modules.) 
Furthermore, in \cite[Thm.~1.2]{bkt} the authors obtained the same transitivity result for Nakayama algebras. 

In this paper, we prove that
mutation of complete $\tau$-exceptional sequences is {\em always transitive} in the 
$\tau$-tilting finite case.
We prove our main result in Section \ref{sec2}, after giving the necessary notation and background in Section \ref{sec1}.

\section{Background}\label{sec1}
We consider algebras which are basic and finite-dimensional over a field $\Bbbk$. We denote the category of finite-dimensional (left) $\Lambda$-modules over an algebra $\Lambda$ by $\mods \Lambda$. We only consider basic modules and let $\delta(M)$ denote the number of indecomposable direct summands of a module $M$. We let 
$n$ denote the rank of $\Lambda$. We consider $\Lambda$-modules up to isomorphism, and all subcategories considered are full and closed under isomorphisms.

Let $\tau$ denote the Auslander-Reiten translate in $\mods \Lambda$. Following \cite{air}, a module $M$ is said to be
{\em $\tau$-rigid} if $\Hom(M, \tau M) = 0$.
If, in addition, $\delta(M) = n$, the module $M$ is said to be {\em $\tau$-tilting}. We assume that $\Lambda$ is $\tau$-tilting finite throughout.

We will consider pairs of objects $(X,Y)$ in $\mods \Lambda$. In particular, a {\em support $\tau$-rigid object}~\cite[Defn.~0.3]{air} is a pair of modules 
$(M,P)$, where $M$ is $\tau$-rigid and $P$ is projective with $\Hom(P,M)= 0$. When convenient, we 
consider instead the corresponding object $X \oplus Y[1]$ in the 
subcategory $\mods \Lambda \amalg \mods \Lambda[1]$ of the bounded derived category of $\mods \Lambda$.

Given a $\Lambda$-module $M$, we have the left and right perpendicular subcategories $$M^\perp = \{X \in \mods \Lambda \mid \Hom(M,X) = 0\} \text{ and } {}^\perp M = \{Y \in \mods \Lambda \mid \Hom(Y,M) = 0\}.$$

\subsection{Torsion classes and wide subcategories}
\label{sec:torsionwide}
For a subcategory $\mathcal{X}$ of $\mods \Lambda$,
we let $\Gen \mathcal{X}$ 
denote the closure of $\mathcal{X}$ under factor objects and we let $\Filt \mathcal{X}$ denote the closure under filtrations. We denote by $\ind \mathcal{X}$ the set of indecomposable objects in $\mathcal{X}$ (up to isomorphism).
A module $Q$ in $\mathcal{X}$ is called {\em $\Ext$-projective} in $\mathcal{X}$ if $\Ext^1(Q,\mathcal{X})=0$. An indecomposable $\Ext$-projective $Q$ which is not 
in $\Gen (\mathcal{X} \setminus {\add Q})$, is called 
{\em split projective}, and otherwise it is {\em non-split projective}. The set of 
$\Ext$-projectives (respectively split projectives, non-split projectives) in $\mathcal{X}$ is denoted by $\P(\mathcal{X})$ (respectively $\Ps(\mathcal{X}), \Pns(\mathcal{X})$).
If $\mathcal{X}$ has a finite number of indecomposable 
$\Ext$-projectives, we denote by $\Pmod(\mathcal{X})$
(respectively $\Pmods(\mathcal{X}), \Pmodns(\mathcal{X})$),
the direct sum of one copy of each indecomposable 
$\Ext$-projective (respectively, split projective, non-split projective) module.
We let $\P(\Lambda)$ denote the subcategory of projective $\Lambda$-modules.

For a module $M$, we let $\Gen M = \Gen (\add M)$, where $\add M$ denotes the additive closure of $M$. A module $M$ is
{\em gen-minimal} if, for any proper direct summand $M'$ 
of $M$, we have $\Gen M' \subsetneq \Gen M$.

{\em Torsion classes} are subcategories of $\mods \Lambda$ closed under factor modules and extensions, and  
for any subcategory $\mathcal{S}$ of $\mods \Lambda$, we have that
$\mathsf{T}(\mathcal{S}) = \Filt (\Gen \mathcal{S})$
is the smallest torsion class
containing $\mathcal{S}$ (see \cite{ms} or \cite{bhm}.)
Dually, torsion-free classes are subcategories of $\mods \Lambda$ closed under submodules and extensions.
For a $\tau$-rigid module $M$, the subcategory $\Gen M$ is a torsion class by~\cite[Thm.~5.10]{as81}.
A {\em torsion pair} in $\mods \Lambda$ is a pair of subcategories 
$(\T,\F)$ such that $\T = {^\perp \F}$ and $\T^{\perp} = \F$.
If $(\T,\F)$ is a torsion pair, then $\T$ is a torsion class and $\F$ is a  torsion-free class.
Moreover, if $\T$ is any torsion class in $\mods \Lambda$, then $(\T,\T^{\perp})$ is a torsion pair, and all torsion pairs are of this form.

Let $(\T,\T^{\perp})$ be a torsion pair and let $M$ be a module. There is then a unique
exact sequence
$
0 \to t_{\T}M \to M \to f_{\T} M \to 0
$
with $t_{\T}M \in \T$ and $f_{\T}M \in {\T}^\perp$,  and this induces functors
$t_\T:\mods \Lambda\to \T$ and $f_\T:\mods \Lambda\to \T^{\perp}$ (see, for example,~\cite[VI.1]{ASS06}).
If the torsion class $\T$ is equal to $\Gen M$ for some module $M$, we denote the functor $f_{\T}$ simply by $f_M$.

A subcategory $\mathcal{X}$ of $\mods \Lambda$ is called
{\em wide} if it closed under kernels, cokernels and extensions.
For a support $\tau$-rigid object $(M,P)$ we consider the
{\em $\tau$-perpendicular} category 
$\J(M,P) = M^{\perp} \cap
{}^\perp \tau M \cap P^{\perp}$, which is wide by \cite[Thm.\ 3.28]{dirrt} (see also~\cite[Cor.\ 3.22]{bst}). 
We let $\J(M) := \J(M,0)$ for a 
$\tau$-rigid module $M$. Note that {\em Serre subcategories} are exactly $\tau$-perpendicular subcategories of the form
$\J(P,0) (= \J(0,P))$ for $P$ a projective $\Lambda$-module.

There are precise interrelations between the sets of wide subcategories, torsion classes and 
gen-minimal $\tau$-rigid modules. We will, in particular, be dealing with the $\tau$-tilting finite case, where we have the following strong results.

\begin{theorem}\label{bijections}
\cite{air,bh,bhm,dij,dirrt,jasso,ms}
Let $\Lambda$ be a $\tau$-tilting finite algebra.
\begin{itemize}
    \item[(a)] The indicated maps define inverse bijections
$$
\begin{tikzcd}[column sep=1.1cm, row sep=1cm]
\{\text{Gen-minimal $\tau$-rigid modules}\}
\ar[rr, shift right=.4em,swap, "M \mapsto \Gen M"] 
\ar[ddr, shift right=.4em, swap,"M \mapsto 
{\J(\Pmodns(\Gen M),P_{\Gen M})}"] & & 
\{\text{Torsion classes}\} \ar[ll,
shift right=.4em, swap,"\T \mapsto \Pmods(\T)"] 
\ar[ddl, shift right=.4em, swap,
"\T \mapsto {\J(\Pmodns(\T),P_{\T})}"] \\
& &  \\
& \{\text{Wide subcategories} 
\ar[uur,shift right=.4em,swap,"\W \mapsto \mathsf{T}(\W)= \Filt(\Gen \W)"] \ar[luu, shift right=.4em, swap,"\W \mapsto \Pmods(\mathsf{T}(\W))"]\}
\end{tikzcd}$$
 where for a torsion-class $\T$, we let 
$P_{\T}$ denote the direct sum of one copy of each  indecomposable projective $Q$ with $\Hom(Q, \T) = 0$.
\item[(b)]
All wide subcategories are
$\tau$-perpendicular categories, and hence equivalent to module categories of $\tau$-tilting finite algebras. Furthermore, the rank of a $\tau$-perpendicular category $\J(M,P)$ (with $(M,P)$ a support $\tau$-rigid object) is 
given by $n-\delta(M) -\delta(P)$.

\end{itemize}
\end{theorem}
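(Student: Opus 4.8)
The plan is to assemble the displayed triangle of bijections from the cited literature and then verify that composing adjacent maps yields the identity. I would begin with the edge between torsion classes and gen-minimal $\tau$-rigid modules. Since $\Lambda$ is $\tau$-tilting finite, every torsion class is functorially finite by \cite{dij}, so the bijection of \cite{air} between functorially finite torsion classes and basic support $\tau$-tilting pairs applies to \emph{all} torsion classes: each torsion class $\T$ arises as $\Gen M$ for a $\tau$-rigid module $M$ whose indecomposable summands are precisely the indecomposable $\Ext$-projectives of $\T$. The point is then to identify, among these $\Ext$-projectives, the split projectives $\Pmods(\T)$ as the unique gen-minimal $\tau$-rigid module generating $\T$. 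Concretely, I would show $\Gen \Pmods(\T) = \T$ (a non-split projective lies in $\Gen$ of the remaining $\Ext$-projectives, so it is redundant for generation) and that dropping any summand of $\Pmods(\T)$ strictly shrinks the generated torsion class, which is essentially the definition of split projective. This makes $\T \mapsto \Pmods(\T)$ and $M \mapsto \Gen M$ mutually inverse.

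Next I would treat the edge between torsion classes and wide subcategories. The map $\W \mapsto \FiltGen(\W) = \Filt(\Gen \W)$ sends a wide subcategory to the smallest torsion class containing it, so the content lies in the reverse map $\T \mapsto \J(\Pmodns(\T), P_{\T})$. Here $P_{\T}$ records the support (the indecomposable projectives killed by $\T$) and $\Pmodns(\T)$ records the non-split $\Ext$-projectives; the pair $(\Pmodns(\T), P_{\T})$ is a support $\tau$-rigid object, so $\J(\Pmodns(\T), P_{\T})$ is genuinely wide by \cite{dirrt}. The heart of this step is to verify that the two assignments are mutually inverse. I would do this by comparing $\T \mapsto \J(\Pmodns(\T), P_{\T})$ with the intrinsic torsion--wide correspondence of \cite{ms}, showing that $\Filt(\Gen \J(\Pmodns(\T), P_{\T})) = \T$ and that $\Pmods(\FiltGen(\W))$ recovers the gen-minimal module attached to $\W$. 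Commutativity of the whole triangle then follows by comparing the two routes from gen-minimal modules to wide subcategories.

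For part (b), that every wide subcategory is $\tau$-perpendicular is immediate from part (a), since each has the form $\J(\Pmodns(\T), P_{\T})$. The equivalence of a $\tau$-perpendicular category with the module category of a $\tau$-tilting finite algebra is $\tau$-tilting reduction \cite{jasso}: $\J(M,P)$ is equivalent to $\mods \Gamma$ for a suitable algebra $\Gamma$, and $\tau$-tilting finiteness is inherited by the reduced algebra. The rank formula would follow from the reduction bookkeeping: passing to $\J(M,P)$ removes exactly one simple for each indecomposable summand of $M$ and one for each indecomposable summand of $P$, giving rank $n - \delta(M) - \delta(P)$.

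The main obstacle I anticipate is the well-definedness and bijectivity of the wide-subcategory edge, specifically reconciling the two descriptions of the correspondence: the intrinsic one coming from \cite{ms} and the explicit $\tau$-perpendicular one $\J(\Pmodns(\T), P_{\T})$ coming from \cite{dirrt}. Showing that these agree --- and in particular that the \emph{non-split} projectives together with the support are exactly the data needed to cut out the wide subcategory --- is where the real work lies; the module/torsion-class edge and the rank count in part (b) are comparatively routine once the relevant results are in place.
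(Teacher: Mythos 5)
Your proposal is correct and follows essentially the same route as the paper's proof, which likewise assembles the triangle from the literature: functorial finiteness of all torsion classes via \cite{dij}, the torsion-class/gen-minimal edge via \cite{air} and \cite[Thm.~1.6]{bhm}, the wide-subcategory edge via \cite[Cor.~3.11]{ms} reconciled with the explicit $\tau$-perpendicular formula $\J(\Pmodns(\T),P_\T)$ via \cite[Lem.~4.3]{bh} (exactly the reconciliation you flag as the main obstacle), and part (b) via \cite[Thm.~4.16, Thm.~4.12]{dirrt} and Jasso's reduction \cite{jasso}. The only cosmetic difference is that you sketch some verifications (e.g.\ $\Gen \Pmods(\T) = \T$ and the strict shrinking upon dropping a split projective summand) that the paper delegates entirely to the cited references.
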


\begin{proof}
We briefly explain how to extract these results from the literature.

Since $\Lambda$ is $\tau$-tilting finite, there are finitely many torsion classes, all of which are functorially finite, by~\cite[Thm.~1.2 and Cor.~2.9]{dij}. One can then obtain the horizontal bijections in (a)
 from~\cite[Sect.\ 2.3]{air}; see also~\cite[Thm.\ 1.6]{bhm}. The bijection from torsion classes to wide subcategories is then from \cite[Cor.~3.11]{ms}. The formula for the inverse of this bijection is from~\cite[Lem.~4.3]{bh}. Finally, the maps on the left hand side of the diagram are compositions of these bijections, or see~\cite[Thm.~4.18]{dirrt}.

 The fact that all wide subcategories are $\tau$-perpendicular categories was proved in \cite[Thm.~4.16]{dirrt}, and is also contained in (a). The rest of statement (b) is from \cite[Thm.~4.12]{dirrt} and \cite[Thm.~1.4 and Thm.~1.5]{jasso}.
\end{proof}

Let $M$ be a $\tau$-rigid module. Then the 
subcategory ${}^\perp{\tau M}$ is a functorially finite torsion class,
and the module $\Pmod({}^\perp{\tau M})$ is 
a $\tau$-tilting module, called the {\em Bongartz-completion} of $M$
(see \cite[Thm.\ 2.10]{air}). Similarly, $\Gen M$ is a functorially finite torsion class and $\Pmod(\Gen M)$ is called
the {\em co-Bongartz-completion} of $M$ ~\cite[Sect.\ 2.3]{air} (see also~\cite[Thm.\ 1.6]{bm}). It is a $\tau$-tilting module in the Serre subcategory $\mods \Lambda / \langle e \rangle $, where
$e$ is the idempotent such that $\Lambda e = P_{\Gen M}$.

We conclude this subsection with the following.

\begin{lemma}\label{lem:rigid_unique}
    Let $M$ and $N$ be $\tau$-rigid modules. Then any two of the following implies the third.
    \begin{enumerate}
        \item $\Gen M = \Gen N$.
        \item ${}^\perp \tau M = {}^\perp \tau N$.
        \item $\J(M) = \J(N)$.
    \end{enumerate}
    Moreover, these three conditions hold if and only if $M = N$.
\end{lemma}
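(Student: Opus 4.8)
The plan is to prove the stronger claim that each of the three \emph{pairs} of conditions already forces $M = N$; the statement then follows at once, since $M = N$ makes all three conditions hold (so ``any two imply the third'') and immediately gives the ``moreover'' equivalence. I would isolate two ingredients: the first handles the two pairs involving $\J$, and the second reduces the remaining pair to the first.

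For the pairs $(1)\,\&\,(3)$ and $(2)\,\&\,(3)$, I would first observe that either $(1)$ or $(2)$ forces $M \oplus N$ to be $\tau$-rigid: under $(1)$ both $M$ and $N$ are direct summands of the co-Bongartz completion $\Pmod(\Gen M)$, and under $(2)$ both are direct summands of the Bongartz completion $\Pmod({}^\perp \tau M)$, each of which is $\tau$-rigid. The crux is then: if $M \oplus N$ is $\tau$-rigid and $\J(M) = \J(N)$, then $M = N$. To see this, let $Q$ be any indecomposable summand of $M$. Since $Q \mid M$ we have $M^\perp \subseteq Q^\perp$ and ${}^\perp \tau M \subseteq {}^\perp \tau Q$, hence $\J(M) \subseteq \J(Q)$, and therefore
$$\J(N \oplus Q) = N^\perp \cap Q^\perp \cap {}^\perp \tau N \cap {}^\perp \tau Q = \J(N) \cap \J(Q) = \J(N),$$
using $\J(N) = \J(M) \subseteq \J(Q)$. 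If $Q$ were not a summand of $N$, then $N \oplus Q$ would be $\tau$-rigid (being a summand of $M \oplus N$) with $\delta(N \oplus Q) = \delta(N) + 1$, so the rank formula of \Cref{bijections}(b) would assign the single subcategory $\J(N \oplus Q) = \J(N)$ the two distinct values $n - \delta(N) - 1$ and $n - \delta(N)$, a contradiction. Hence every indecomposable summand of $M$ is a summand of $N$, and by symmetry $M = N$. I expect this rank argument to be the main point: it is what converts an equality of $\tau$-perpendicular categories into an equality of modules.

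For the remaining pair $(1)\,\&\,(2)$, I would use the identity $\J(M) = (\Gen M)^\perp \cap {}^\perp \tau M$, valid because $(\Gen M)^\perp = M^\perp$ (every object of $\Gen M$ is a quotient of a power of $M$). The right-hand side depends only on the subcategories $\Gen M$ and ${}^\perp \tau M$, so $(1)$ and $(2)$ together yield $\J(M) = \J(N)$, i.e.\ condition $(3)$; this reduces $(1)\,\&\,(2)$ to the case already treated and in particular gives $M = N$. Assembling the three pairs shows that any two conditions imply the third and that all three are equivalent to $M = N$. The only delicate bookkeeping will be confirming the $\tau$-rigidity of $M \oplus N$ and the additivity $\delta(N \oplus Q) = \delta(N) + 1$ that feeds the rank count; everything else is formal manipulation of perpendicular subcategories.
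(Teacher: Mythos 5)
Your proposal is correct, but it follows a genuinely different route from the paper's proof. The paper establishes the three pairwise implications separately: (1)\,\&\,(2)\,$\Rightarrow$\,(3) is formal, and the other two rest on Jasso's decomposition ${}^\perp \tau M = (\Gen M) * \J(M)$ \cite[Thm.~3.12]{jasso} --- used directly for (1)\,\&\,(3)\,$\Rightarrow$\,(2), and after intersecting with ${}^\perp \J(M)$ for (2)\,\&\,(3)\,$\Rightarrow$\,(1); the ``moreover'' part is then proved by noting that (1) makes $N$ a summand of the co-Bongartz completion of $M$ and (2) a summand of the Bongartz completion, which share no summands beyond $M$ by \cite[Lem.~1.7]{bhm}. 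You instead prove the formally stronger (though equivalent) assertion that each pair already forces $M=N$. You invoke the same Bongartz/co-Bongartz mechanism, but only to conclude that $M \oplus N$ is $\tau$-rigid --- which is indeed valid, since under (1) both $M$ and $N$ are $\Ext$-projective in the common functorially finite torsion class $\Gen M$, and under (2) in ${}^\perp \tau M$, by the Auslander--Smal{\o} criterion; this small verification is the one step you should spell out. You then replace both Jasso's formula and \cite[Lem.~1.7]{bhm} by a rank count: if an indecomposable summand $Q$ of $M$ were not a summand of $N$, then $N \oplus Q$ would be basic $\tau$-rigid with $\J(N \oplus Q) = \J(N) \cap \J(Q) = \J(N)$ (using $\J(N)=\J(M)\subseteq \J(Q)$), while \cref{bijections}(b) would assign this single subcategory the two ranks $n - \delta(N) - 1$ and $n - \delta(N)$. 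This is sound, because the rank is an invariant of the subcategory itself (it is equivalent to a module category), and your crux --- a basic $\tau$-rigid $M \oplus N$ with $\J(M) = \J(N)$ forces $M = N$ --- is a clean standalone fact. What each approach buys: the paper's argument proves each implication locally and leans on established structural results, whereas yours is more self-contained relative to the paper's toolkit (essentially only \cref{bijections}(b) plus the two completions), at the cost of routing all three implications through the single statement $M = N$ rather than exhibiting them individually.
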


\begin{proof}
    It is clear that (1) and (2) together imply (3). Let $(\Gen M) * \J(M)$ denote all modules that appear as midterms of short exact sequences with the left term in
    $\Gen M$ and the right term in $\J(M)$.
    By \cite[Thm.~3.12]{jasso}, we have
    $${}^\perp \tau M = (\Gen M) * \J(M),$$
and likewise for $N$. This shows that (1) and (3) together imply (2). Finally, intersecting both sides of the equation above with ${}^\perp \J(M)$ yields
    $${}^\perp \tau M \cap {}^\perp \J(M) = \Gen M,$$
    and likewise for $N$.
    This shows that (2) and (3) together imply (1).

    For the moreover part, suppose that (1) and (2) hold. Now (1) implies that $N \in \P(\Gen M)$, and so $N$ is a direct summand of the co-Bongartz completion $M \oplus C_M$ of $M$. Similarly (2) implies that $N \in \P({}^\perp \tau M)$, and so $N$ is a direct summand of the Bongartz completion $M \oplus B_M$ of $M$. Now the modules $B_M$ and $C_M$ have no direct summands in common by \cite[Lem.~1.7]{bhm}, so together these facts imply that $N$ is a direct summand of $M$. Reversing the roles of $M$ and $N$, we see that also $M$ is a direct summand of $N$. We conclude that $M = N$.
\end{proof}

\subsection{$\tau$-exceptional sequences}

Following \cite{bm}, a sequence $(X_k,X_{k+1},\dots,X_n)$ of indecomposable $\Lambda$-modules is called a {\em $\tau$-exceptional sequence} in $\mods\Lambda$ if
    \begin{itemize}
        \item[-] $X_n$ is $\tau$-rigid in $\mods \Lambda$, and
        \item[-] if $k <n$, then $(X_k,\dots,X_{n-1})$ is a $\tau$-exceptional sequence in $\J(X_n)$.
    \end{itemize}

\begin{remark}
    Note that, by \cref{bijections}(b), the wide subcategory $\J(X_n)$ is equivalent to a module category, so there is an Auslander-Reiten translate $\tau_{\J(X_n)}$, which in general is {\em different from} $\tau$ in $\mods \Lambda$. Saying that a module $M \in \J(X_n)$ is $\tau$-rigid in $\J(X_n)$ thus means that $M$ is $\tau_{\J(X_n)}$-rigid; that is, that $\Hom(M,\tau_{\J(X_n)}M) = 0$. Similarly, saying that a pair $(M,P)$ (identified with the object $M \oplus P[1]$) is support $\tau$-rigid in $\J(X_n)$ means that $M$ is $\tau_{\J(X_n)}$-rigid, $P \in \P(\J(X_n))$, and $\Hom(P,M) = 0$.
\end{remark}

Let $M$ be a $\Lambda$-module with $\delta(M) = t$.
A sequence $(M_1, M_2, \dots, M_t)$ of indecomposable modules is an \emph{ordering} of $M$ if
$\oplus_{i=1}^t M_i = M$, and a 
{\em TF-ordering}~\cite[Defn.\ 3.1]{mt} if $M_i \not \in \Gen(\oplus_{j>i} M_j)$ for all $i$.
By a {TF-ordered} $\tau$-rigid module, we will mean
a $\tau$-rigid module with a fixed TF-ordering. 
So, for example, the
module $_{\Lambda}{\Lambda}$ 
gives rise to $n!$ different TF-ordered $\tau$-rigid modules. Since $\tau$-rigid modules have rank at most $n$, we typically denote a TF-ordered $\tau$-rigid module by $\Y = (Y_k,\ldots,Y_n)$. In this case, we write $\bigoplus \Y = Y_k \oplus \cdots \oplus Y_n$.

TF-ordered $\tau$-rigid modules and $\tau$-exceptional sequences are related by the following.
    
\begin{theorem}\cite[Thm.\ 5.1]{mt}
Let $\Y= (Y_k,Y_{k+1}, \dots,Y_n)$ be a 
TF-ordered $\tau$-rigid module, and for all $k \leq i \leq n$,
let $X_i = f_{\oplus N_{j>i}}Y_i$. 
Then $\X =(X_k,X_{k+1}, \dots,X_n)$ is a $\tau$-exceptional sequence. Moreover, the map $\omega: \Y \mapsto \X$ is a bijection from the set of TF-ordered $\tau$-rigid modules with $\delta(\bigoplus \Y) =n -k+1$ to the set of $\tau$-exceptional sequences of length $n-k+1$.
\end{theorem}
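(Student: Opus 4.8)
The plan is to argue by induction on the length $\ell = n-k+1$, using the recursive definition of $\tau$-exceptional sequences together with a reduction to $\J(Y_n)$. When $\ell = 1$ the product over $j>n$ is empty, so $T_n := \Gen(\bigoplus_{j>n}Y_j) = 0$ and $X_n = Y_n$, which is $\tau$-rigid and indecomposable; here $\omega$ is the identity on the (literally equal) sets of length-one sequences, so the base case is immediate. For the inductive step write $T_i = \Gen(\bigoplus_{j>i}Y_j)$, so that $X_i = f_{T_i}Y_i$ and $X_n = Y_n$. The first task is to push the data $Y_k,\dots,Y_{n-1}$ into the wide subcategory $\J(Y_n)$, which by \cref{bijections}(b) is the module category of a $\tau$-tilting finite algebra of rank $n-1$, so that the induction hypothesis applies there.

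To this end I set $\bar Y_i := f_{Y_n}Y_i$ for $k \le i \le n-1$, where $f_{Y_n}$ denotes $f_{\Gen Y_n}$. Since $f_{Y_n}$ takes values in $(\Gen Y_n)^\perp = Y_n^\perp$, and since each $Y_i$ lies in the torsion class ${}^\perp\tau Y_n$ (as $Y_i \oplus Y_n$ is $\tau$-rigid) which is closed under quotients, the quotient $\bar Y_i$ of $Y_i$ lies in $Y_n^\perp \cap {}^\perp\tau Y_n = \J(Y_n)$. I would then invoke $\tau$-tilting reduction: the assignment $Y_i \mapsto \bar Y_i$ is the restriction of Jasso's bijection between indecomposable $\tau$-rigid $\Lambda$-modules compatible with $Y_n$ and indecomposable $\tau_{\J(Y_n)}$-rigid modules, so $\bar Y := (\bar Y_k,\dots,\bar Y_{n-1})$ is an ordering of a $\tau_{\J(Y_n)}$-rigid module. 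Two compatibilities now do the work. First, reduction intertwines $\Gen$ with $\Gen$, so $\Gen_{\J(Y_n)}(\bigoplus_{i<j<n}\bar Y_j) = T_i \cap \J(Y_n)$; combined with the transitivity $f_{T_i} = f_{T_i}\circ f_{Y_n}$ for the nested pair $\Gen Y_n \subseteq T_i$ and the fact that the torsion pair of $\J(Y_n)$ with torsion class $T_i\cap\J(Y_n)$ is the restriction of $(T_i,T_i^\perp)$, one gets $f^{\J(Y_n)}_{T_i\cap\J(Y_n)}\bar Y_i = f_{T_i}\bar Y_i = f_{T_i}Y_i = X_i$. Thus the $\tau$-exceptional sequence attached to $\bar Y$ inside $\J(Y_n)$ is exactly $(X_k,\dots,X_{n-1})$. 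Second, the same identities show $\bar Y_i \in T_i \cap \J(Y_n)$ if and only if $X_i = 0$ if and only if $Y_i \in T_i$, so the TF-ordering condition for $\Y$ is equivalent to that for $\bar Y$. Hence $\bar Y$ is TF-ordered $\tau$-rigid in $\J(Y_n)$, and by the induction hypothesis $(X_k,\dots,X_{n-1})$ is a $\tau$-exceptional sequence in $\J(Y_n)$; together with $X_n = Y_n$ $\tau$-rigid this proves $\X$ is a $\tau$-exceptional sequence.

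For bijectivity I would build a two-sided inverse recursively. Given a $\tau$-exceptional sequence $\X = (X_k,\dots,X_n)$, put $Y_n := X_n$; by definition $(X_k,\dots,X_{n-1})$ is $\tau$-exceptional in $\J(Y_n)$, so the induction hypothesis produces a unique TF-ordered $\tau_{\J(Y_n)}$-rigid module $\bar Y = (\bar Y_k,\dots,\bar Y_{n-1})$ with associated sequence $(X_k,\dots,X_{n-1})$. I then lift each $\bar Y_i$ to the unique indecomposable $\tau$-rigid $Y_i \in {}^\perp\tau Y_n$ with $f_{Y_n}Y_i = \bar Y_i$, which exists and is unique by the reduction bijection above (uniqueness can also be extracted from \cref{lem:rigid_unique}). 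The equivalences established in the previous paragraph guarantee that $\Y = (Y_k,\dots,Y_n)$ is again TF-ordered $\tau$-rigid and that $\omega(\Y) = \X$; running the two constructions against one another shows they are mutually inverse, giving the claimed bijection.

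The main obstacle is the package of reduction compatibilities used in the second paragraph: that $f_{Y_n}$ realizes Jasso's $\tau$-tilting reduction on the relevant indecomposables, that it intertwines $\Gen$ in $\mods\Lambda$ with $\Gen$ in $\J(Y_n)$, and that the torsion-free functors for the nested torsion classes $\Gen Y_n \subseteq T_i$ restrict correctly to $\J(Y_n)$. Once these identities are in place everything else is bookkeeping, but they demand a careful analysis of how the torsion pairs of $\J(Y_n)$ sit inside the interval $[\Gen Y_n, {}^\perp\tau Y_n]$ of the torsion lattice of $\mods\Lambda$, which is precisely the heart of $\tau$-tilting reduction. The lifting step in the inverse, and the uniqueness of the lift, is where \cref{lem:rigid_unique} and the injectivity half of the reduction bijection become indispensable.
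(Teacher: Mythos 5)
The paper offers no proof of this statement---it is quoted from \cite[Thm.~5.1]{mt}---so there is no in-paper argument to compare against; judged on its own, your inductive proof via $\tau$-tilting reduction is correct in outline and is essentially the standard argument, and the ``package of reduction compatibilities'' you defer to is exactly the toolkit the present paper assembles from the literature. In detail: that $\bar Y_i = f_{Y_n}Y_i$ is indecomposable $\tau_{\J(Y_n)}$-rigid (valid because the TF-ordering forces $Y_i \notin \Gen Y_n$) is \cref{def:Emap}(a); your identity $f^{\J(Y_n)}_{T_i\cap\J(Y_n)}\circ f_{Y_n}=f_{T_i}$ on the relevant modules is the module case of \cref{lem:E_map_sum}, and the recursion you extract from it is precisely \cref{prop:TF_tes_bijection}; and the $\Gen$-intertwining $\Gen_{\J(Y_n)}\bigl(\bigoplus_{i<j<n}\bar Y_j\bigr)=T_i\cap\J(Y_n)$, which you flag as the main obstacle, has a two-line proof in the spirit of \cref{lem:gen_pass_down}: one inclusion holds because each $\bar Y_j$ is a quotient of $Y_j$, and for the other, applying the epimorphism-preserving functor $f_{Y_n}$ to an epimorphism $\bigl(\bigoplus_{j>i}Y_j\bigr)^r\twoheadrightarrow Z$ with $Z\in\J(Y_n)$ yields an epimorphism $\bigl(\bigoplus_{i<j<n}\bar Y_j\bigr)^r\twoheadrightarrow Z$. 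Two points deserve tightening, though neither is a substantive gap. First, in the inverse construction you lift summand-by-summand through the bijection on indecomposables, but you must also know that the lifts assemble into a single $\tau$-rigid module $\bigoplus_{i=k}^{n}Y_i$; this needs Jasso's reduction bijection at the level of (not necessarily indecomposable) support $\tau$-rigid objects, as in \cite[Thm.~3.16]{jasso} and the final clause of \cref{def:Emap}---here \cref{lem:rigid_unique} is the wrong tool, since it compares two given $\tau$-rigid modules via their associated categories rather than providing uniqueness of preimages, which is in any case already contained in the bijectivity assertion of \cref{def:Emap}. Second, since the ambient category shrinks at each step, the induction must formally range over all $\tau$-perpendicular categories (equivalently, over module categories of algebras of smaller rank, by \cite[Thm.~3.8]{jasso}, or \cref{bijections}(b) in the $\tau$-tilting finite setting); you gesture at this when invoking the induction hypothesis ``there,'' but the inductive statement should be phrased accordingly.
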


The following map, extending Jasso's reduction map \cite[Thm. 3.16]{jasso}, is crucial for our results.

\begin{proposition}
    \cite[Sect. 5]{bm}\label{def:Emap} For any support $\tau$-rigid object $T=(M,P)$, there is  a bijection
$$\{\text{indecomposable support $\tau$-rigid objects $X$ in $\mods \Lambda$ with $T \oplus X$ support $\tau$-rigid} \}$$
$$\downarrow \Ebm_T$$
$$\{\text{indecomposable support $\tau$-rigid objects $Y$ in $\J(T)$}\}.$$
If $T=(M,0)$ we let $\Ebm_M := \Ebm_{(M,0)}$, and if $X$ is a
module such that $(X,0)$ is in the domain of $\Ebm_T$, we let
$\Ebm_T(X) := \Ebm_T(X,0)$. 

With this notation and with $X$ a module in the domain of $\Ebm_M$, we have 
\begin{itemize}
\item[(a)] $\Ebm_{M} (X)= f_M(X)$ for
$X \not \in \Gen M$, and
\item[(b)] $\Ebm_{M} (X) = Q[1]$ for some module $Q$ which is projective in $\J(M)$, otherwise.
\end{itemize}
Furthermore, extending $\Ebm_T$ additively, we have that 
$\Ebm_T(X)$ is support $\tau$-rigid in $\J(T)$, for any $X$ such
that $T \oplus X$ is support $\tau$-rigid in $\mods \Lambda$.
\end{proposition}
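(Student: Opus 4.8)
The plan is to derive the proposition from Jasso's $\tau$-tilting reduction \cite[Thm.~3.16]{jasso}, promoting his bijection on support $\tau$-tilting objects to one on indecomposable support $\tau$-rigid objects. Recall that the support $\tau$-tilting objects having $T$ as a direct summand form an interval in the poset of support $\tau$-tilting objects, with maximum the Bongartz completion of $T$ and minimum the co-Bongartz completion, and that Jasso's theorem identifies this interval, as a poset, with the full poset of support $\tau$-tilting objects of $\J(T)\simeq\mods\Lambda_T$; write $\mathrm{red}_T$ for the resulting reduction. For a general object $T=(M,P)$ I would first isolate the case $P=0$: since $\J(M,P)=\J(M)\cap P^\perp$ and $\J(0,P)=P^\perp$ is the Serre subcategory $\mods\Lambda/\langle e\rangle$ (with $\Lambda e=P$), the passage to $\J(T)$ factors as an elementary Serre-quotient step killing the support at $P$, followed by the reduction $\Ebm_M$ carried out inside the resulting module category. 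This concentrates the real content in the case $T=(M,0)$ and the formulas (a) and (b).

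For $T=(M,0)$ I would first check that the formulas land in $\J(M)$. If $X\notin\Gen M$, then $X\in{}^\perp\tau M$ because $M\oplus X$ is $\tau$-rigid, and ${}^\perp\tau M$ is the Bongartz torsion class, hence closed under the quotient $f_M(X)$ of $X$; since moreover $f_M(X)\in(\Gen M)^\perp=M^\perp$, I get $f_M(X)\in M^\perp\cap{}^\perp\tau M=\J(M)$, which gives (a). If instead $X\in\Gen M$, then $\Gen(M\oplus X)=\Gen M$, so $X$ is $\Ext$-projective in $\Gen M$ and hence an indecomposable summand of the co-Bongartz completion $M\oplus C_M$ distinct from the summands of $M$, i.e.\ a summand of $C_M$. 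As this completion is the minimum of Jasso's interval and maps under $\mathrm{red}_T$ to the minimal support $\tau$-tilting object $\Lambda_M[1]$ of $\J(M)$, the summand of $C_M$ corresponding to $X$ is a shifted projective $Q[1]$ of $\J(M)$, which gives (b).

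To obtain the bijection itself, I would complete $T\oplus X$ to a support $\tau$-tilting object $V=T\oplus X\oplus W$ and apply $\mathrm{red}_T$ to $V$. A poset isomorphism of support $\tau$-tilting objects is compatible with mutation, so it sends indecomposable summands to indecomposable summands; the summand of $\mathrm{red}_T(V)$ produced by $X$ is then a well-defined indecomposable support $\tau$-rigid object of $\J(T)$, which I define to be $\Ebm_T(X)$. Independence of the completion $W$ and the construction of the inverse follow from the same mutation-compatibility, reading Jasso's isomorphism in the other direction. The additive extension and the claim that $\Ebm_T(X)$ is support $\tau$-rigid in $\J(T)$ then hold because such bijections preserve the property of being a partial summand of a support $\tau$-tilting object, which in $\J(T)$ is exactly support $\tau$-rigidity.

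The step I expect to be the main obstacle is matching this abstract reduction with the explicit descriptions (a)--(b): one must verify that the summand of $\mathrm{red}_T(V)$ attached to $X$ really equals $f_M(X)$ (respectively $Q[1]$), which amounts to tracking the torsion pair $(\Gen M,M^\perp)$ through the equivalence $\J(M)\simeq\mods\Lambda_M$ and checking that the reduction restricts to $f_M$ on the relevant modules. Verifying indecomposability and $\tau_{\J(M)}$-rigidity of the output directly, rather than inheriting them from the bijection, is then the routine part, which I would keep in reserve as a consistency check.
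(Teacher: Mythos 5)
Note first that the paper does not actually prove this proposition: it is imported wholesale from \cite[Sect.~5]{bm}. There the logic runs in the opposite direction to yours: the map $\Ebm_T$ is \emph{defined} by the explicit formulas (after reducing the case $T=(M,P)$ to $T=(M,0)$ over the quotient algebra, much as in your first paragraph, which is sound), and the substantive work is to prove that this explicitly given map is a bijection, using Jasso's reduction and properties of $f_M$. You instead try to conjure the map abstractly out of Jasso's poset isomorphism and postpone matching it with (a) and (b). That inversion is where your argument has a genuine gap: the sentence ``a poset isomorphism compatible with mutation sends indecomposable summands to indecomposable summands'' does not define a map on indecomposables. What the isomorphism $\mathrm{red}_T$ gives you is, for each \emph{chosen} completion $V = T\oplus X\oplus W$, a correspondence between Hasse arrows at $V$ (inside the interval) and Hasse arrows at $\mathrm{red}_T(V)$, hence a summand of $\mathrm{red}_T(V)$ attached to the pair $(V,X)$. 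Since $X$ lies in many completions, well-definedness requires showing the output is independent of $V$, and you dismiss exactly this point with ``follows from the same mutation-compatibility.'' It does not: adjacent completions $V$ and $\mu_Z V$ need not fit into commuting mutation squares, and the consistency of the edge-labelling across the whole fibre of completions is essentially the content of the proposition, not a formal consequence of having a poset isomorphism. A repair in the spirit of your plan exists --- the completions of $T\oplus X$ form an interval (between the Bongartz and co-Bongartz completions of $T\oplus X$), $\mathrm{red}_T$ carries it to an interval in the support $\tau$-tilting poset of $\J(T)$, and one must then show that this image interval is the fibre of completions of a \emph{unique} support $\tau$-rigid object of $\J(T)$ (a uniqueness statement of the type of Lemma~\ref{lem:rigid_unique}), indecomposable by the rank count of Theorem~\ref{bijections}(b) --- but none of these steps appears in your text, and they are not routine.

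Two further points. First, you concede that matching the abstract map with (a)--(b) is ``the main obstacle'' and leave it open; but without the bijection being well defined, even your case (a) is incomplete: the torsion-pair computation correctly places $f_M(X)$ in $\J(M)$, yet shows neither $f_M(X)\neq 0$, nor its indecomposability, nor its $\tau_{\J(M)}$-rigidity, and these cannot be ``inherited from the bijection'' that has not yet been constructed. (In \cite{bm} these facts are proved directly and are the backbone of the argument.) Second, some smaller repairs: in case (b) the completion you reduce should be the full support $\tau$-tilting object $(M\oplus C_M, P_{\Gen M})$, i.e.\ it must include the shifted projectives, before its image can be identified with the minimal object of $\J(M)$; and your interval/exchange-graph arguments implicitly use connectivity and cover-equals-mutation facts that are unproblematic under this paper's standing $\tau$-tilting finiteness assumption but would need care in the generality in which \cite{bm} proves the statement.
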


\begin{remark} \label{rem:allmodule}
For some of the notions and constructions defined above, we will also use relative versions 
with respect to a $\tau$-perpendicular, and hence wide, subcategory $\W$. 
Note that, by \cite[Thm.\ 3.8]{jasso}, $\tau$-perpendicular categories
of $\mods \Lambda$ are again
equivalent to module categories. Also note that in the $\tau$-tilting finite case, we have,
from \cref{bijections}(b), that
all wide subcategories of $\mods \Lambda$ are again
equivalent to module categories of $\tau$-tilting finite algebras.

When using the relative versions, the wide subcategory $\W$ will appear in the sub- or superscript. For example, we will consider:
\setlength{\columnsep}{-10cm}
\begin{multicols}{3}
    \begin{itemize}
        \item $\J_{\W}(-)$
        \item $\Ebm_{(-)}^{\W}(-)$
        \item $\Gen_{\W} (-)$
        \item $\omega_{\W} (-)$   
        \item $f_{\T}^{\W}(-)$
        \item $f_M^{\W}(-)$
    \end{itemize}
    \end{multicols}
\noindent Note that $\Gen_{\W} M = \Gen M \cap \W$.
\end{remark}   

\begin{definition}\label{def:defs}
    \begin{enumerate}
    \item For a $\tau$-exceptional sequence 
    $(X_k, X_{k+1}, \dots, X_n)$, we define recursively
$$\J(X_k, X_{k+1}, \dots, X_n)= \J_{\J(X_{k+1}, \dots, X_n)}(X_k),$$
    for $k<n$.
        \item For a $\tau$-perpendicular subcategory $\W$, we denote the set
        of $\tau$-exceptional sequences (resp. TF-ordered $\tau$-rigid modules) $\mathcal{X} = (X_k,\ldots,X_n)$ such that $\J(\mathcal{X}) = \W$ (resp. $\J(\bigoplus\mathcal{X}) = \W$) by $\tes(\W)$ (resp. $\tf(\W)$)
    \end{enumerate}
\end{definition}

We conclude this subsection by recalling and establishing the properties of the maps $\J$, $\Ebm_M$, $\omega$, and $f_M$ that we need.

The following important property of the $E$-map 
was proved in the $\tau$-tilting finite case in \cite[Thm. 5.9]{bm2}, and then generalized in 
\cite{bh}.

\begin{lemma}\cite[Thm. 6.12]{bh}\label{lem:E_map_sum}
    Let $X \oplus Y \oplus Z$ be a support $\tau$-rigid object. Then
    $$\Ebm_{Y\oplus Z}(X) = \Ebm_{\Ebm_Z(Y)}^{\J(Z)}(\Ebm_Z(X)).$$
\end{lemma}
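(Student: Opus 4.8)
The plan is to prove the identity by reducing to indecomposable $X$, exhibiting a common ambient wide subcategory for the two sides, and then comparing the resulting support $\tau$-rigid objects using the bijections of \cref{bijections}. Since every $E$-map in sight is defined by extending additively in its argument (\cref{def:Emap}), it suffices to treat $X$ indecomposable; note also that $\Ebm_Z(Y)\oplus\Ebm_Z(X) = \Ebm_Z(Y\oplus X)$ is support $\tau$-rigid in $\J(Z)$, so every map on the right-hand side is defined.

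The first substantive step is the \emph{associativity of $\tau$-perpendicular categories},
$$\J(Y\oplus Z) = \J_{\J(Z)}(\Ebm_Z(Y)),$$
which is needed because a priori the left side of the lemma lives in $\J(Y\oplus Z)$ and the right side in $\J_{\J(Z)}(\Ebm_Z(Y))$. I would first check the inclusion $\J_{\J(Z)}(\Ebm_Z(Y))\subseteq\J(Y\oplus Z)$: an object $W$ of the left is in $\J(Z)$, and applying $\Hom(-,W)$ to the canonical sequence $0\to t_ZY\to Y\to f_ZY\to 0$ (using $\Hom(\Gen Z, W)=0$) identifies $\Hom(Y,W)$ with $\Hom(\Ebm_Z(Y),W)=0$, so $W\in Y^\perp$; the conditions involving $\tau Y$ and the projective/support part are the analogous but more delicate computations. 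I would then upgrade the inclusion to equality by a rank count: $\Ebm_Z$ is a bijection preserving the number of indecomposable summands, so by the rank formula $n-\delta(M)-\delta(P)$ of \cref{bijections}(b) both $\J(Y\oplus Z)$ and $\J_{\J(Z)}(\Ebm_Z(Y))$ have the same rank $r$, and since a wide subcategory of full rank inside a rank-$r$ module category is the whole category, the inclusion is an equality. Write $\W$ for this common category; both $\Ebm_{Y\oplus Z}(X)$ and $\Ebm^{\J(Z)}_{\Ebm_Z(Y)}(\Ebm_Z(X))$ are now indecomposable support $\tau$-rigid objects of $\W$.

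To identify these two objects of $\W$, I would pass to torsion classes. As $\W$ is equivalent to $\mods$ of a $\tau$-tilting finite algebra (\cref{bijections}(b)), \cref{bijections}(a) and \cref{lem:rigid_unique} apply inside $\W$, so an indecomposable support $\tau$-rigid object there is determined by the torsion class it generates together with its support. The $E$-map arises from Jasso reduction, under which the interval of torsion classes of $\mods\Lambda$ lying between $\Gen T$ and the Bongartz completion of $T$ is order-isomorphic to the set of all torsion classes of $\J(T)$, with $\Ebm_T$ transporting split-projective generators across this isomorphism. The heart of the matter is then the \emph{associativity of reduction}: restricting the torsion class $\Gen(X\oplus Y\oplus Z)$ to $\J(Y\oplus Z)$ in one step agrees with restricting first to $\J(Z)$ and then, inside $\J(Z)$, to $\J_{\J(Z)}(\Ebm_Z(Y))$ --- the familiar fact that a sub-interval of a sub-interval is a sub-interval. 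Concretely I would verify this using the descriptions $\Ebm_M(X)=f_M(X)$ for $X\notin\Gen M$ and $\Ebm_M(X)=Q[1]$ otherwise from \cref{def:Emap}, together with the compatibility of the torsion-free functors $f_{(-)}$ under composition along the chain $\Gen Z\subseteq\Gen(Y\oplus Z)$; matching torsion class and support on both sides then forces equality by \cref{bijections}(a).

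I expect the main obstacle to be the bookkeeping for the support (shifted-projective) components and the resulting case split in \cref{def:Emap}. Each of $\Ebm_Z(Y)$ and $\Ebm_Z(X)$ may be either a module or a shifted projective $Q[1]$, according to whether $Y$ or $X$ lies in $\Gen Z$, and the same dichotomy recurs for the outer $E$-map; checking the associativity of reduction \emph{uniformly} across these cases --- in particular that a summand pushed into the support part at the first stage is tracked correctly at the second, and that the iterated torsion-free functors compose to $f_{Y\oplus Z}$ on the module part --- is where the genuine work lies. Securing the wide-subcategory equality $\J(Y\oplus Z)=\J_{\J(Z)}(\Ebm_Z(Y))$ cleanly is the other delicate point, since the entire comparison rests on both sides sharing the ambient category $\W$.
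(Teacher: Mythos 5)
A preliminary remark: this paper contains no proof of \cref{lem:E_map_sum} --- it is imported verbatim from \cite[Thm.~6.12]{bh} (with the $\tau$-tilting finite case going back to \cite[Thm.~5.9]{bm2}) --- so your proposal can only be measured against what a complete proof requires, not against an argument in this paper. Measured that way, your outline follows the actual structure of the literature: one first proves $\J(Y\oplus Z)=\J_{\J(Z)}(\Ebm_Z(Y))$ (this is exactly \cref{lem:J_sum}, i.e.\ \cite[Thm.~6.4]{bh}, which the paper also imports as an independent citation), and then compares the two indecomposable support $\tau$-rigid objects inside this common category. Several of your steps are sound as stated: the reduction to indecomposable $X$; the $\Hom$-vanishing half of the inclusion $\J_{\J(Z)}(\Ebm_Z(Y))\subseteq\J(Y\oplus Z)$; the rank-count upgrade to equality, which is legitimate under the paper's standing $\tau$-tilting finiteness assumption via \cref{bijections}(b) (though note \cite{bh} proves the statement with no finiteness hypothesis, so your rank argument does not recover the cited theorem in its actual generality); the identification of an indecomposable $\tau$-rigid module in $\W$ from $\Gen_\W$ of it (indecomposables are automatically gen-minimal, so \cref{bijections}(a) applies); and, in the case where $X$, $Y$, $Z$, $\Ebm_Z(X)$ and $\Ebm_Z(Y)$ are all modules, the composition $f^{\J(Z)}_{f_ZY}\circ f_Z=f_{Y\oplus Z}$ on $X$ can indeed be verified directly from uniqueness of canonical sequences, essentially as you indicate.

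The gap is that the two places you explicitly defer are not bookkeeping but the entire mathematical content. First, the ${}^\perp\tau Y$ and support conditions in the inclusion are not ``analogous but more delicate'' versions of your $\Hom$ computation: they require relating $\tau_{\J(Z)}(\Ebm_Z(Y))$ to $\tau Y$, i.e.\ the compatibility of Auslander--Reiten translates under reduction (\cite[Thm.~3.16]{jasso} and its extensions in \cite{bm,dirrt}), and this is precisely what makes \cite[Thm.~6.4]{bh} a theorem rather than a computation. Second, your torsion-class identification silently assumes both sides of the identity are of the same type (module versus shifted projective), which needs a statement such as: $X\in\Gen(Y\oplus Z)$ if and only if $X\in\Gen Z$ or $\Ebm_Z(X)\in\Gen_{\J(Z)}(\Ebm_Z(Y))$ --- nowhere proved in your sketch. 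Moreover, since $X\oplus Y\oplus Z$ is only assumed support $\tau$-rigid, $X$ itself may be a shifted projective $P[1]$, and in that case (as well as when $\Ebm_Z(X)$ or summands of $\Ebm_Z(Y)$ are shifted) the ``compare $\Gen$'' mechanism is simply unavailable; one must instead track projectives through the bijection of \cref{lem:proj_bij} and its relative versions, which is a different argument, not a variant of the module case. (A small slip along the way: the interval of torsion classes relevant to Jasso reduction runs from $\Gen T$ to ${}^\perp\tau T$, not to the Bongartz completion itself.) So what you have is a correct and well-organized strategy --- genuinely the same route as \cite{bh} --- but with the two theorems that power it ($\tau$-translate compatibility and the uniform shifted-component case analysis) named rather than proved; as a standalone proof it is incomplete.
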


We will also need the following observation.

\begin{lemma}\label{lem:gen_pass_down}
    Let $X \oplus Y \oplus Z$ be a $\tau$-rigid module with $X$ and $Y$ indecomposable. Suppose that $Y \notin \Gen Z$, that $X \notin \Gen Z$, and that $X \in \Gen(Y\oplus Z)$. Then $\Ebm_Z X = f_Z X \in \Gen(f_Z Y) = \Gen(\Ebm_Z Y)$.
\end{lemma}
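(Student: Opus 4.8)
The plan is to reduce the entire statement to one structural fact: the torsion-free functor $f_Z = f_{\Gen Z}$ preserves epimorphisms. I would begin by disposing of the two outer equalities. Since $X \notin \Gen Z$ and $Y \notin \Gen Z$, \cref{def:Emap}(a) applies to each and gives $\Ebm_Z X = f_Z X$ and $\Ebm_Z Y = f_Z Y$ directly. (Here $\Ebm_Z$ is available because $X \oplus Y \oplus Z$ being $\tau$-rigid makes each of $Z \oplus X$ and $Z \oplus Y$ a $\tau$-rigid, hence support $\tau$-rigid, object.) These identities are exactly the first and last equalities in the statement, so the lemma collapses to the single containment $f_Z X \in \Gen(f_Z Y)$.

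The crux is then to show that $f_Z$ carries surjections to surjections. Because $Z$ is $\tau$-rigid, $\Gen Z$ is a torsion class and $(\Gen Z, (\Gen Z)^{\perp})$ is a torsion pair, so $f_Z$ is the torsion-free coreflection $M \mapsto M / t_{\Gen Z} M$. Given any epimorphism $p \colon B \to C$, its restriction sends $t_{\Gen Z} B$ into $t_{\Gen Z} C$, since $p(t_{\Gen Z} B)$ is a quotient of a module in the torsion class $\Gen Z$ and hence lies in the maximal such submodule of $C$; therefore $p$ descends to a map $f_Z B \to f_Z C$. As the composite $B \to C \to f_Z C$ is already surjective and factors through $f_Z B$, the induced map $f_Z B \to f_Z C$ is surjective as well.

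Finally I would feed in the hypothesis $X \in \Gen(Y \oplus Z)$. Fixing an epimorphism $(Y \oplus Z)^{m} \twoheadrightarrow X$ and applying the additive functor $f_Z$, the preservation of epimorphisms yields a surjection from $f_Z\big((Y \oplus Z)^{m}\big)$ onto $f_Z X$. Since $f_Z$ is additive and $f_Z Z = 0$ (as $Z \in \Gen Z$ is torsion), the source simplifies to $(f_Z Y)^{m}$, exhibiting $f_Z X$ as a quotient of a direct sum of copies of $f_Z Y$; that is, $f_Z X \in \Gen(f_Z Y)$, as required. I expect the only genuine subtlety to be the middle step: because $f_Z$ is merely right exact and not exact, one must argue the preservation of epimorphisms by hand rather than appeal to exactness. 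Note also that this argument uses only that $\Gen Z$ is a torsion class, and not the full $\tau$-rigidity of the triple, which enters solely through \cref{def:Emap}.
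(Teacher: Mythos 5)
Your proposal is correct and follows essentially the same route as the paper: establish $\Ebm_Z X = f_Z X$ and $\Ebm_Z Y = f_Z Y$ via \cref{def:Emap}, then apply $f_Z$ to an epimorphism $(Y \oplus Z)^m \twoheadrightarrow X$ and use additivity together with $f_Z Z = 0$. The only difference is cosmetic: where the paper cites right exactness of $f_Z$, you verify the needed preservation of epimorphisms directly from the torsion-pair structure, which is a fine (and correct) self-contained substitute.
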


\begin{proof}
    We have $f_Z X = \Ebm_Z X$ and $f_Z Y = \Ebm_Z Y$ by Lemma~\ref{def:Emap}. Since $f_Z$ is right exact and $f_Z Z = 0$, the result follows from applying $f_Z$ to an exact sequence of the form
    $(Y\oplus Z)^r\rightarrow X\rightarrow 0$.
\end{proof}

\begin{lemma}\cite[Lem.~4.9]{bm}\label{lem:proj_bij}
    Let $M$ be a $\tau$-rigid module. Then $f_M$ induces a bijection
    $$\mathrm{ind}(\P({}^\perp \tau M))\setminus \mathrm{add}(M) \rightarrow \ind(\P(\J(M))).$$
\end{lemma}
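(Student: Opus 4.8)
The plan is to realise the source and target as finite sets of the same cardinality, to exhibit $N\mapsto f_M(N)$ as a well-defined injection between them, and to conclude by counting. Write $T=\Pmod({}^\perp\tau M)=M\oplus M'$ for the Bongartz completion of $M$, so that $T$ is $\tau$-tilting with $n$ indecomposable summands and $\P({}^\perp\tau M)=\add T$. Then the source $\ind(\P({}^\perp\tau M))\setminus\add M$ is exactly the set of indecomposable summands of $M'$, of which there are $n-\delta(M)$. On the other side, $\J(M)=\J(M,0)$ is a $\tau$-perpendicular subcategory of rank $n-\delta(M)$ by \cref{bijections}(b), and since it is equivalent to a module category its $\Ext$-projective objects are precisely its projectives; hence $\ind(\P(\J(M)))$ also has $n-\delta(M)$ elements. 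It therefore suffices to show that $f_M$ carries the first set injectively into the second.

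First I would check that no indecomposable summand $N$ of $M'$ lies in $\Gen M$, arguing as in the proof of \cref{lem:rigid_unique}. Since $M\in{}^\perp\tau M$ and ${}^\perp\tau M$ is a torsion class we have $\Gen M\subseteq{}^\perp\tau M$, so the $\Ext$-projectivity of $N$ in ${}^\perp\tau M$ restricts to $\Ext$-projectivity in $\Gen M$. If we also had $N\in\Gen M$, then $N$ would be a summand of the co-Bongartz completion $M\oplus C'$; as $N\notin\add M$ this forces $N\mid C'$, contradicting that $M'$ and $C'$ share no direct summand by \cite[Lem.~1.7]{bhm}. Hence $N\notin\Gen M$, and \cref{def:Emap}(a) gives $f_M(N)=\Ebm_M(N)$, which is a nonzero indecomposable object of $\J(M)$ because $\Ebm_M$ takes values in indecomposable support $\tau$-rigid objects of $\J(M)$. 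Injectivity of $f_M$ on the source is then immediate: every such $N$ lies in the domain of $\Ebm_M$ (as $M\oplus N$ is $\tau$-rigid), and $\Ebm_M$ is a bijection, hence injective, there.

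It remains to verify that $f_M(N)$ is projective in $\J(M)$, which is the homological crux. Using $M^\perp=(\Gen M)^\perp$ we have $f_M(N)\in M^\perp$, and $f_M(N)\in{}^\perp\tau M$ as a quotient of $N\in{}^\perp\tau M$, so $f_M(N)\in M^\perp\cap{}^\perp\tau M=\J(M)$. Now fix $Y\in\J(M)$ and apply $\Hom(-,Y)$ to the canonical sequence $0\to t_{\Gen M}N\to N\to f_M(N)\to 0$, yielding the exact piece $\Hom(t_{\Gen M}N,Y)\to\Ext^1(f_M(N),Y)\to\Ext^1(N,Y)$. The right term vanishes because $N$ is $\Ext$-projective in ${}^\perp\tau M$ and $Y\in\J(M)\subseteq{}^\perp\tau M$, while the left term vanishes because $t_{\Gen M}N\in\Gen M$ and $Y\in M^\perp=(\Gen M)^\perp$; hence $\Ext^1(f_M(N),Y)=0$, and since $\J(M)$ is extension-closed this $\Ext$ agrees with the one computed in $\J(M)$. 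Thus $f_M(N)$ is $\Ext$-projective, i.e.\ projective, in $\J(M)$, so $f_M$ maps the source into the target, and being an injection between two sets of equal finite cardinality $n-\delta(M)$ it is a bijection. The main obstacle I anticipate is the bookkeeping rather than any single computation: one must rule out that a source summand falls into $\Gen M$ (where $f_M$ would kill it), and then rely on the rank formula \cref{bijections}(b) together with the $\tau$-tilting count to upgrade the injection to a surjection, since a direct description of which projectives of $\J(M)$ are hit is less transparent.
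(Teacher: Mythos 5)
Your proposal is correct, but note that there is nothing in this paper to compare it against: the lemma is quoted from \cite[Lem.~4.9]{bm} without proof, so you have supplied an argument where the paper supplies only a citation. Your verification steps are sound and essentially the standard ones. Well-definedness (that no indecomposable summand $N$ of the Bongartz complement lies in $\Gen M$) is handled by playing the Bongartz completion against the co-Bongartz completion via \cite[Lem.~1.7]{bhm}, which is precisely the device used in this paper's proof of \cref{lem:rigid_unique}; the identification $f_M(N)=\Ebm_M(N)$ and injectivity then come for free from \cref{def:Emap}, and your long-exact-sequence computation showing $\Ext^1(f_M(N),Y)=0$ for $Y\in\J(M)$ is the expected homological core, including the (correctly flagged) point that extension-closure of $\J(M)$ and exactness of its inclusion let you identify $\Ext$-projectives of $\J(M)$ with projectives of the equivalent module category. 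Where you genuinely depart from a constructive treatment is surjectivity: rather than describing which projectives of $\J(M)$ are hit, you count, using that $\Pmod({}^\perp\tau M)$ is $\tau$-tilting with $n$ indecomposable summands \cite[Thm.~2.10]{air} and that $\J(M)$ has rank $n-\delta(M)$ by Jasso's formula (\cref{bijections}(b), which holds for arbitrary $\Lambda$ by \cite{jasso}, so your argument does not secretly need the paper's standing $\tau$-tilting finiteness assumption). This trade is legitimate and efficient---an injection between finite sets of equal cardinality is a bijection---at the mild cost of opacity: a direct argument would exhibit $f_M$ of the Bongartz complement as the module $\Pmod(\J(M))$, which your computation in fact already implies once the count is in place.
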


\begin{proposition}\cite[Cor.~3.6]{bhm}\label{prop:uniqueness}
    Let $(X,Y)$ and $(X',Y')$ be $\tau$-exceptional sequences with $\J(X,Y) = \J(X',Y')$. Then $X = X'$ if and only if $Y = Y'$.
\end{proposition}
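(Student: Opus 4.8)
The plan is to reduce both implications to a single statement, which I will call (\textdagger): for a $\tau$-tilting finite algebra, the assignment $M\mapsto \J(M)$ is injective on indecomposable $\tau$-rigid modules. Since every wide subcategory here is equivalent to the module category of a $\tau$-tilting finite algebra (\cref{bijections}(b)), I may apply (\textdagger) both in $\mods\Lambda$ and inside any wide subcategory. Throughout I use that $\J(X,Y)=\J_{\J(Y)}(X)$ (\cref{def:defs}) and that $\rk\J(Y)=n-1$ while $\rk\J(X,Y)=n-2$ (\cref{bijections}(b), also in its relative form).

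Suppose first that $Y=Y'$, and set $\W=\J(Y)$. By the definition of a $\tau$-exceptional sequence, $X$ and $X'$ are indecomposable $\tau_\W$-rigid modules in $\W$, and the hypothesis becomes $\J_\W(X)=\J(X,Y)=\J(X',Y')=\J_\W(X')$. Applying (\textdagger) inside $\W$ gives $X=X'$. This is the easy direction.

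Suppose instead that $X=X'$, and write $\V=\J(X,Y)=\J(X',Y')$; then $\J_{\J(Y)}(X)=\V=\J_{\J(Y')}(X)$. I would recover $\J(Y)$ intrinsically from the pair $(\V,X)$. The subcategory $\V$ has corank one in $\J(Y)$, and since a proper inclusion of wide subcategories of a $\tau$-perpendicular category strictly increases the rank, $\V$ is a coatom in the lattice of wide subcategories of $\J(Y)$. As $X\in\J(Y)\setminus\V$, the smallest wide subcategory containing $\V\cup\{X\}$ — necessarily contained in $\J(Y)$ — must then equal $\J(Y)$. The identical computation inside $\J(Y')$ identifies this same subcategory with $\J(Y')$, so $\J(Y)=\J(Y')$, and (\textdagger) in $\mods\Lambda$ yields $Y=Y'$.

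It remains to prove (\textdagger), where the real difficulty lies. I would derive it from the identity ${}^\perp\J(X)=\Gen X$ for every indecomposable $\tau$-rigid $X$: granting this, $\J(X)=\J(X')$ forces $\Gen X={}^\perp\J(X)={}^\perp\J(X')=\Gen X'$, and since indecomposable $\tau$-rigid modules are gen-minimal, the horizontal bijection of \cref{bijections}(a) gives $X=X'$. The inclusion $\Gen X\subseteq{}^\perp\J(X)$ is clear, as ${}^\perp\J(X)$ is a torsion class (being a left perpendicular subcategory) containing $X$. For the reverse inclusion, take $W\in{}^\perp\J(X)$ and split it by the torsion pair $(\Gen X,X^\perp)$ as $0\to tW\to W\to fW\to 0$ with $fW\in X^\perp$; being a quotient of $W$, the module $fW$ again lies in ${}^\perp\J(X)$, so it suffices to show $X^\perp\cap{}^\perp\J(X)=0$. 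Here is the crux: I expect that ${}^\perp\J(X)\subseteq{}^\perp\tau X$, for which it is enough that $\tau X$ embed into an object of $\J(X)$ (so that $\Hom(Z,\J(X))=0$ implies $\Hom(Z,\tau X)=0$). Granting this, any $V\in X^\perp\cap{}^\perp\J(X)$ lies in $X^\perp\cap{}^\perp\tau X=\J(X)$ while being left-orthogonal to $\J(X)$, whence $\Hom(V,V)=0$ and $V=0$. Proving that $\tau X$ embeds into an object of $\J(X)$ — presumably via the Bongartz completion of $X$ together with the identity ${}^\perp\tau X=(\Gen X)*\J(X)$ used in the proof of \cref{lem:rigid_unique} — is the step I expect to be the main obstacle of the whole argument.
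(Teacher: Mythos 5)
The paper does not prove this proposition internally: it is imported from \cite[Cor.~3.6]{bhm}, where it is established for arbitrary finite-dimensional algebras. Your argument is therefore necessarily a different route, and it is worth noting immediately that it is genuinely narrower in scope, since it leans on $\tau$-tilting finiteness throughout; that is acceptable here, as this is the paper's standing assumption. Within that setting, your architecture is essentially sound. The reduction of both directions to the injectivity statement (\textdagger) is correct; the easy direction is fine; and the ``coatom'' recovery of $\J(Y)$ from $(\V,X)$ in the hard direction works, because strict monotonicity of rank along proper inclusions of wide subcategories does hold in the $\tau$-tilting finite case: a wide subcategory $\W_1$ of $\mods\Lambda$ contained in a wide $\W_2 \simeq \mods\Gamma$ is again wide in $\W_2$ (kernels, cokernels and extensions in $\W_2$ agree with those in $\mods\Lambda$), hence is $\tau$-perpendicular in $\W_2$ by Theorem~\ref{bijections}(b) applied to $\Gamma$, and so has rank $\rk \W_2 - \delta(M) - \delta(P)$ with $(M,P) \neq 0$ whenever the inclusion is proper. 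Since the smallest wide subcategory containing $\V \cup \{X\}$ is intrinsic, your argument correctly identifies it with both $\J(Y)$ and $\J(Y')$.

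The genuine gap is exactly where you flagged it: you never prove that $\tau X$ embeds into an object of $\J(X)$, and your entire chain ${}^\perp\J(X) \subseteq {}^\perp\tau X \Rightarrow {}^\perp\J(X) = \Gen X \Rightarrow$ (\textdagger) hangs on that unproven claim. As written, the proof is therefore incomplete. However, the detour is unnecessary, because (\textdagger) follows in one line from material already quoted in this paper, by precisely the mechanism of Corollary~\ref{cor:gen_min_unique}: an indecomposable $\tau$-rigid module $X$ is trivially gen-minimal (its only proper direct summand is $0$), so Theorem~\ref{thm:gen_min_characterization} gives $X = \Pmods({}^\perp\J(X))$, and $\J(X) = \J(X')$ then forces $X = \Pmods({}^\perp\J(X)) = \Pmods({}^\perp\J(X')) = X'$. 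If you want your intermediate identity ${}^\perp\J(X) = \Gen X$ anyway, it too follows without the embedding claim: apply $\Gen$ to $X = \Pmods({}^\perp\J(X))$ and use that $\T \mapsto \Pmods(\T)$ and $M \mapsto \Gen M$ are mutually inverse by Theorem~\ref{bijections}(a), so $\Gen \Pmods(\T) = \T$ for every torsion class $\T$. With this substitution, your proof closes up and is correct in the $\tau$-tilting finite case; note only that, unlike \cite[Cor.~3.6]{bhm}, it does not recover the statement for general algebras, where for instance the characterization $X = \Pmods({}^\perp\J(X))$ and the rank-monotonicity argument are not available in the form used here.
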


\begin{lemma}\label{lem:J_sum}\cite[Thm.~6.4]{bh}
    Let $X \oplus Y$ be a support $\tau$-rigid object. Then $\J(X\oplus Y) = \J_{\J(Y)}(\Ebm_Y(X))$.
\end{lemma}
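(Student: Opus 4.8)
The plan is to first reduce to the case that $X$ is indecomposable, and then to use the rank formula of \cref{bijections}(b) to upgrade a single inclusion to the desired equality. Throughout, I read the statement as ranging over all $\tau$-tilting finite algebras at once, so that it may be applied inside the wide subcategory $\W := \J(Y)$, which is again (equivalent to) such a module category. I induct on $\delta(X)$, the case $\delta(X)=0$ being trivial and the case $\delta(X)=1$ treated below. For the inductive step write $X = X_1 \oplus X'$ with $X_1$ indecomposable, so $\delta(X') < \delta(X)$. Applying the inductive hypothesis (with second argument $X_1\oplus Y$), then the indecomposable case to $\J(X_1\oplus Y)$, then \cref{lem:E_map_sum}, and finally the inductive hypothesis inside $\W$, I obtain
\begin{align*}
\J(X\oplus Y)
&= \J_{\J(X_1\oplus Y)}\big(\Ebm_{X_1\oplus Y}(X')\big)
= \J_{\J_{\W}(\Ebm_Y X_1)}\big(\Ebm_{X_1\oplus Y}(X')\big)\\
&= \J_{\J_{\W}(\Ebm_Y X_1)}\big(\Ebm^{\W}_{\Ebm_Y X_1}(\Ebm_Y X')\big)
= \J_{\W}\big(\Ebm_Y X' \oplus \Ebm_Y X_1\big)
= \J_{\W}(\Ebm_Y X),
\end{align*}
using additivity of $\Ebm_Y$ in the last step. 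This reduces everything to the case $\delta(X)=1$.

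So assume $X$ is indecomposable. Both $\J(X\oplus Y)$ and $\J_{\W}(\Ebm_Y X)$ are $\tau$-perpendicular, hence wide, subcategories of $\W$: the former because perpendicularity to $X\oplus Y$ forces perpendicularity to $Y$, so $\J(X\oplus Y)\subseteq \J(Y)=\W$. By \cref{bijections}(b) we have $\rk \J(X\oplus Y) = n-\delta(X\oplus Y) = (n-\delta(Y))-1$, while $\Ebm_Y X$ is an \emph{indecomposable} support $\tau$-rigid object of $\W$ by \cref{def:Emap}, so $\rk \J_{\W}(\Ebm_Y X) = \rk\W - 1 = (n-\delta(Y))-1$ as well. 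Any wide subcategory properly contained in a wide subcategory $\V$ is $\tau$-perpendicular in $\V$, and hence of strictly smaller rank by \cref{bijections}(b) applied inside $\V$. Thus it suffices to prove the single inclusion $\J(X\oplus Y)\subseteq \J_{\W}(\Ebm_Y X)$.

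To prove this inclusion, fix $W\in \J(X\oplus Y)$, so in particular $W\in\W$, and distinguish the cases of \cref{def:Emap}. If $X$ is a module with $X\notin\Gen Y$, then $\Ebm_Y X = f_Y X =: X'$ is a module in $\W$ and membership $W\in\J_{\W}(X')$ means $\Hom(X',W)=0$ and $\Hom(W,\tau_{\W}X')=0$. The first holds because $X'$ is a quotient of $X$ and $\Hom(X,W)=0$. If instead $X\in\Gen Y$, or $X = P[1]$ is a shifted projective, then $\Ebm_Y X = Q[1]$ for a projective $Q$ of $\W$ by \cref{def:Emap} (together with \cref{lem:proj_bij} in the shifted case), so $\J_{\W}(\Ebm_Y X)=Q^{\perp}\cap\W$ is a Serre subcategory of $\W$ and the only condition to check is $\Hom(Q,W)=0$.

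The remaining condition in each case is the crux, and I expect it to be the main obstacle. It is an instance of the compatibility of the relative structure of $\W=\J(Y)$ with the global one: in the first case one needs the Bongartz torsion class of $\Ebm_Y X$ in $\W$ to be the restriction ${}^\perp\tau(X\oplus Y)\cap\W$ of the global Bongartz torsion class, whence $W\in\J(X\oplus Y)\subseteq{}^\perp\tau(X\oplus Y)$ yields $\Hom(W,\tau_{\W}X')=0$; in the remaining cases one needs the analogous identification of the relevant projective of $\W$, whence $\Hom(Q,W)=0$. Establishing these comparisons amounts to controlling the relative Auslander--Reiten translate $\tau_{\W}$ and the projectives of $\W$ in terms of $\tau$ and $f_Y$, for which the natural tool is Jasso's functorial $\tau$-tilting reduction \cite{jasso}, identifying the $\tau$-structure of $\J(Y)$ with the restriction of the global one over the torsion interval $[\Gen Y, {}^\perp\tau Y]$; the shifted-projective summands of $Y$ are removed first by passing to the corresponding Serre subcategory, and the behaviour of projectives under reduction is governed by \cref{lem:proj_bij}.
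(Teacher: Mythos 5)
Your statement is quoted by the paper from \cite[Thm.~6.4]{bh} without proof, so the comparison is with that source rather than with an argument in the paper itself. Your outer scaffolding is sound in the $\tau$-tilting finite setting the paper works in: the induction on $\delta(X)$ via \cref{lem:E_map_sum} and additivity of $\Ebm_Y$ correctly reduces to indecomposable $X$, and the rank argument is legitimate --- both $\J(X\oplus Y)$ and $\J_{\W}(\Ebm_Y X)$ are wide subcategories of $\W = \J(Y)$ of rank $\rk \W - 1$ by \cref{bijections}(b), and a properly contained wide subcategory is $\tau$-perpendicular of strictly smaller rank, so a single inclusion suffices. Note, though, that this already makes your route strictly less general than \cite{bh}: Theorem 6.4 there holds over arbitrary finite-dimensional algebras, where there are infinitely many torsion classes, not every wide subcategory is $\tau$-perpendicular, and the rank trick is unavailable.

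The genuine gap is that the ``crux'' you isolate --- $\Hom(W,\tau_{\W}(f_Y X)) = 0$ in the unshifted case, and $\Hom(Q,W)=0$ in the $\Gen Y$ and shifted-projective cases --- is precisely the substantive content of the lemma, and you do not prove it; you only name a tool. The needed compatibility ${}^{\perp_{\W}}\tau_{\W}(\Ebm_Y X) = {}^{\perp}\tau(X \oplus Y) \cap \W$ is not a formal consequence of the existence of the bijection $\Ebm_Y$ in \cref{def:Emap}: Jasso's reduction identifies torsion classes of $\W$ with the interval $[\Gen Y, {}^{\perp}\tau Y]$, but one must still verify that the relative Bongartz class of $f_Y X$ corresponds to ${}^{\perp}\tau(X\oplus Y)$ under this identification, and this verification is essentially the proof of \cite[Thm.~6.4]{bh} itself (several pages of analysis of the $\Ebm$-maps in Section 6 there). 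Worse, the cases $X \in \Gen Y$ and $X = P[1]$, where $\Ebm_Y X = Q[1]$ and you must explicitly identify $Q$ and show $\Hom(Q,-)$ vanishes on $\J(X \oplus Y)$, fall outside the scope of Jasso's theorem, which concerns $\tau$-rigid modules without shifted summands; they require the extensions developed in \cite{bm} and \cite{bh}. So as written, the argument defers at its key step to (a relative form of) the very statement being proved. The parts you did verify --- $\Hom(f_Y X, W) = 0$ from surjectivity of $X \twoheadrightarrow f_Y X$, and the Serre-subcategory description of $\J_{\W}(Q[1])$ --- are correct but are not where the difficulty lies.
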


We then have the following direct consequence of Lemmas~\ref{lem:J_sum} and~\ref{lem:E_map_sum}. 

\begin{proposition}\label{prop:same_J}
    Let $\mathcal{X}$ be a $\tau$-exceptional sequence. Then $\J(\mathcal{X}) = \J(\bigoplus \omega^{-1}(\mathcal{X}))$. In particular, for $\W$ a $\tau$-perpendicular subcategory, $\omega$ restricts to a bijection $\tf(\W) \rightarrow \tes(\W)$.
\end{proposition}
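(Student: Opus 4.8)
The plan is to establish the displayed identity $\J(\mathcal{X}) = \J(\bigoplus \omega^{-1}(\mathcal{X}))$ by induction on the length $n-k+1$ of the $\tau$-exceptional sequence $\mathcal{X} = (X_k, \ldots, X_n)$, and then to deduce the ``in particular'' clause formally. Throughout I write $\Y = \omega^{-1}(\mathcal{X}) = (Y_k, \ldots, Y_n)$, so that $X_i = f_{\oplus_{j>i} Y_j} Y_i$ for every $i$. The length-one case is immediate, since there $X_n = f_0 Y_n = Y_n$ and both sides equal $\J(Y_n)$.

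For the inductive step I would set $Z = Y_{k+1} \oplus \cdots \oplus Y_n$ and first observe that the truncation $\mathcal{X}' = (X_{k+1}, \ldots, X_n)$ is again a $\tau$-exceptional sequence with $\omega^{-1}(\mathcal{X}') = (Y_{k+1}, \ldots, Y_n)$. Indeed, $(Y_{k+1}, \ldots, Y_n)$ is TF-ordered, and the defining formula $X_i = f_{\oplus_{j>i} Y_j} Y_i$ is unchanged for $i \ge k+1$, so $\omega$ sends it to $\mathcal{X}'$. The inductive hypothesis applied to $\mathcal{X}'$ then gives $\J(\mathcal{X}') = \J(Z)$, and by \cref{def:defs}(1) we have $\J(\mathcal{X}) = \J_{\J(\mathcal{X}')}(X_k) = \J_{\J(Z)}(X_k)$.

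It remains to identify $\J_{\J(Z)}(X_k)$ with $\J(Y_k \oplus Z) = \J(\bigoplus \Y)$. Since $\Y$ is TF-ordered, $Y_k \notin \Gen Z$, so \cref{def:Emap}(a) yields $X_k = f_Z Y_k = \Ebm_Z(Y_k)$, a genuine indecomposable module lying in $\J(Z)$. Then \cref{lem:J_sum}, applied to the support $\tau$-rigid object $Y_k \oplus Z$, gives exactly $\J(Y_k \oplus Z) = \J_{\J(Z)}(\Ebm_Z(Y_k)) = \J_{\J(Z)}(X_k)$, which closes the induction. (If one prefers to unfold both sides at once rather than induct, \cref{lem:E_map_sum} is the tool that reconciles the single map $\Ebm_Z$ with the iterated relative $\Ebm$-maps produced by repeated use of \cref{lem:J_sum}.)

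Finally, the ``in particular'' clause follows formally: $\omega$ is already a bijection from TF-ordered $\tau$-rigid modules to $\tau$-exceptional sequences, and the identity just proved shows $\J(\omega(\Y)) = \J(\bigoplus \Y)$ for every $\Y$. Hence $\Y \in \tf(\W)$ if and only if $\omega(\Y) \in \tes(\W)$, so $\omega$ restricts to a bijection $\tf(\W) \to \tes(\W)$. I expect the main subtlety to be organizational: the two recursions run in opposite directions --- a $\tau$-exceptional sequence is built by stripping off the last entry $X_n$, whereas $\J(-)$ is defined by stripping off the first entry $X_k$ --- so the crux is verifying that the truncation $\mathcal{X}'$ really is a $\tau$-exceptional sequence with the expected $\omega$-preimage, which is precisely what lets the induction close.
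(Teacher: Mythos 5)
Your proof is correct and is essentially the paper's intended argument: the paper gives no written proof, stating the proposition as a direct consequence of Lemmas~\ref{lem:J_sum} and~\ref{lem:E_map_sum}, and your induction --- strip the first term, use the inductive hypothesis to replace $\J(X_{k+1},\ldots,X_n)$ by $\J(Z)$ for $Z=Y_{k+1}\oplus\cdots\oplus Y_n$, identify $X_k=f_Z Y_k=\Ebm_Z(Y_k)$ via Proposition~\ref{def:Emap}(a) (valid since TF-ordering gives $Y_k\notin\Gen Z$), and apply Lemma~\ref{lem:J_sum} to $Y_k\oplus Z$ --- is the natural unwinding, with the truncation step correctly justified through $\omega$. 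The only (cosmetic) deviation is that by inducting from the first entry and using the $f_M$-description of $\omega$ from \cite[Thm.~5.1]{mt}, you never actually invoke Lemma~\ref{lem:E_map_sum}; that lemma would be needed if one instead inducted from the last entry inside $\J(X_n)$ and had to reconcile relative and absolute $\Ebm$-maps, a point your parenthetical already acknowledges.
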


As a consequence of Lemmas~\ref{lem:E_map_sum} and~\ref{lem:J_sum}, we obtain the following. See also \cite[Thm.~5.1]{mt}.

\begin{proposition}\label{prop:TF_tes_bijection}
    Let $(Y_k,\ldots,Y_n)$ be a TF-ordered $\tau$-rigid module and let $\omega(Y_k,\ldots,Y_n) = (X_k,\ldots,X_n)$. For $k \leq i \leq n+1$, denote $M_i = \bigoplus_{j = i+1}^n Y_j$. 
    Then, for $k \leq i \leq n$,  we have   $$X_i = \Ebm_{M_{i}} (Y_i) =
     \Ebm_{X_{i+1}}^{\J(M_{i+1})}(\Ebm_{M_{i+1}}Y_i).
    $$
\end{proposition}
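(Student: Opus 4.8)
The plan is to prove the two displayed equalities in turn: the first comes straight from the definition of the bijection $\omega$ together with \cref{def:Emap}, while the second follows by applying \cref{lem:E_map_sum} to the splitting $M_i = Y_{i+1}\oplus M_{i+1}$ and then feeding in the first equality at the next index.

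First I would check that $X_i = \Ebm_{M_i}(Y_i)$ for every $k\le i\le n$. By \cite[Thm.~5.1]{mt} the module $X_i$ is by definition $f_{M_i}(Y_i)$, the torsion-free part of $Y_i$ for the torsion pair $(\Gen M_i,(\Gen M_i)^\perp)$. Since $(Y_k,\dots,Y_n)$ is a TF-ordering we have $Y_i\notin\Gen(\bigoplus_{j>i}Y_j)=\Gen M_i$, which is exactly the hypothesis of \cref{def:Emap}(a); that statement then gives $\Ebm_{M_i}(Y_i)=f_{M_i}(Y_i)=X_i$. (At $i=n$ one has $M_n=0$, so this reads $X_n=Y_n$.)

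For the second equality I would write $M_i=Y_{i+1}\oplus M_{i+1}$ and apply \cref{lem:E_map_sum} with $X=Y_i$, $Y=Y_{i+1}$ and $Z=M_{i+1}$; this is permitted because $Y_i\oplus Y_{i+1}\oplus M_{i+1}$ is a direct summand of the $\tau$-rigid module $\bigoplus\Y$ and hence support $\tau$-rigid. The lemma yields
$$\Ebm_{M_i}(Y_i)=\Ebm_{\Ebm_{M_{i+1}}(Y_{i+1})}^{\J(M_{i+1})}\bigl(\Ebm_{M_{i+1}}(Y_i)\bigr),$$
and substituting the already-established identity $\Ebm_{M_{i+1}}(Y_{i+1})=X_{i+1}$ into the subscript turns the right-hand side into $\Ebm_{X_{i+1}}^{\J(M_{i+1})}(\Ebm_{M_{i+1}}Y_i)$, as required. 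There is no circularity, since the first equality was proved outright for all indices before being used here.

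Beyond this bookkeeping, the only genuine care is to confirm that each map is applied to an object of its domain and that the relevant ambient categories agree; here \cref{lem:J_sum} is what guarantees that the relative map $\Ebm_{X_{i+1}}^{\J(M_{i+1})}$ takes values in $\J_{\J(M_{i+1})}(X_{i+1})=\J(Y_{i+1}\oplus M_{i+1})=\J(M_i)$, which is precisely where $X_i=\Ebm_{M_i}(Y_i)$ lives, so the two sides are comparable. I expect the one point needing explicit attention to be the boundary index $i=n$, where $M_n=M_{n+1}=0$ and the splitting $M_i=Y_{i+1}\oplus M_{i+1}$ degenerates; this case should be read with the conventions $X_{n+1}=0$ and $\J(0)=\mods\Lambda$, under which both sides reduce to $Y_n$.
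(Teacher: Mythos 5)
Your proposal is correct and matches the paper's intended argument exactly: the paper states \cref{prop:TF_tes_bijection} as a direct consequence of \cref{lem:E_map_sum} and \cref{lem:J_sum} (with the first equality coming from the definition of $\omega$ in \cite[Thm.~5.1]{mt} together with \cref{def:Emap}(a), using the TF-ordering to guarantee $Y_i \notin \Gen M_i$), and your application of \cref{lem:E_map_sum} to the splitting $M_i = Y_{i+1} \oplus M_{i+1}$ followed by the substitution $X_{i+1} = \Ebm_{M_{i+1}}(Y_{i+1})$ is precisely that derivation. Your explicit attention to the codomain check via \cref{lem:J_sum} and to the degenerate case $i = n$ is sound and, if anything, slightly more careful than the paper's one-line attribution.
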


\begin{corollary}\label{cor:uniqueness}
    Let $(X,Y)$ and $(Z,Y)$ be TF-ordered $\tau$-rigid modules with $\J(X\oplus Y) = \J(Z\oplus Y)$. Then $X = Z$.
\end{corollary}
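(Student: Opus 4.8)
The plan is to reduce the statement to the uniqueness property of $\tau$-exceptional sequences recorded in \cref{prop:uniqueness}, by transporting the two TF-ordered $\tau$-rigid modules $(X,Y)$ and $(Z,Y)$ to $\tau$-exceptional sequences via the bijection $\omega$. A direct attack is hopeless: by \cref{lem:rigid_unique} the assignment $M \mapsto \J(M)$ is far from injective on $\tau$-rigid modules (one needs two of the three conditions there to recover $M$), so the hypothesis $\J(X \oplus Y) = \J(Z \oplus Y)$ cannot by itself force $X = Z$ without exploiting the shared tail $Y$. The point of passing through $\omega$ is precisely that \cref{prop:uniqueness} is exactly the injectivity statement we lack, phrased in terms of a common tail.

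Concretely, I would first apply $\omega$ to both TF-ordered modules. By \cref{prop:TF_tes_bijection}, with $M_i = \bigoplus_{j>i} Y_j$, the sequence $\omega(X,Y)$ has leading term $\Ebm_{Y}(X)$ (where I abbreviate $\Ebm_{Y} := \Ebm_{\bigoplus Y}$) and its remaining terms are exactly $\omega(Y)$; likewise $\omega(Z,Y)$ has leading term $\Ebm_{Y}(Z)$ and the \emph{same} tail $\omega(Y)$. The structural reason the tails coincide is that the term $X_i$ for $i>k$ depends only on $M_i$ and not on the leading summand, so replacing $X$ by $Z$ changes nothing beyond the first entry. Next, applying \cref{prop:same_J} to each TF-ordered module converts the hypothesis $\J(X \oplus Y) = \J(Z \oplus Y)$ into $\J(\omega(X,Y)) = \J(\omega(Z,Y))$.

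Now \cref{prop:uniqueness} applies to the pair $\omega(X,Y) = (\Ebm_{Y}(X),\, \omega(Y))$ and $\omega(Z,Y) = (\Ebm_{Y}(Z),\, \omega(Y))$: their $\J$'s agree and their tails $\omega(Y)$ agree, so the proposition forces the leading terms to agree, i.e. $\Ebm_{Y}(X) = \Ebm_{Y}(Z)$. Finally, since $\Ebm_{Y}$ is a bijection on indecomposable support $\tau$-rigid objects (\cref{def:Emap}), injectivity yields $X = Z$, as required. (An essentially equivalent route replaces \cref{prop:same_J} and \cref{prop:TF_tes_bijection} by a single application of \cref{lem:J_sum}, giving $\J(X \oplus Y) = \J_{\J(Y)}(\Ebm_{Y}(X))$ and similarly for $Z$, after which \cref{prop:uniqueness} supplies the needed injectivity of $\J_{\J(Y)}$ on leading terms.)

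I expect the main obstacle to be bookkeeping rather than a genuine difficulty. One must verify carefully that the two $\omega$-images are presented in exactly the leading-term-plus-tail form demanded by \cref{prop:uniqueness}, with literally identical tails. One should also record that the TF-condition gives $X \notin \Gen Y$ and $Z \notin \Gen Y$, so that \cref{def:Emap}(a) guarantees $\Ebm_{Y}(X) = f_Y(X)$ and $\Ebm_{Y}(Z) = f_Y(Z)$ are honest indecomposable modules; this confirms that the comparison legitimately takes place among leading terms of genuine $\tau$-exceptional sequences, rather than among shifted projectives where the $\J$-invariant can coincide without the objects coinciding.
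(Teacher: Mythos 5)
Your proposal is correct and takes essentially the same route as the paper's own proof: pass through $\omega$ via \cref{prop:TF_tes_bijection} to obtain the $\tau$-exceptional pairs $(\Ebm_Y X, Y)$ and $(\Ebm_Y Z, Y)$, apply \cref{prop:uniqueness} to force $\Ebm_Y X = \Ebm_Y Z$, and conclude $X = Z$ from the bijectivity of $\Ebm_Y$ in \cref{def:Emap}. The only differences are cosmetic: you make explicit the step $\J(\omega(X,Y)) = \J(\omega(Z,Y))$ via \cref{prop:same_J} (which the paper leaves implicit) and note that the TF-condition $X, Z \notin \Gen Y$ keeps the leading terms honest modules, both of which are sound and, if anything, tighten the paper's argument.
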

\begin{proof}
    By Proposition~\ref{prop:TF_tes_bijection}, we get $\tau$-exceptional sequences $\omega(X,Y) = (\Ebm_Y X, Y)$ and $\omega(Z,Y) = (\Ebm_{Y}Z,Y)$. Proposition~\ref{prop:uniqueness} then implies that $\Ebm_{Y}X = \Ebm_{Y}Z$. Since $\Ebm_Y$ is a bijection, this shows that $X = Z$.
\end{proof}
The following description of the torsion class
generated by a $\tau$-exceptional sequence will be useful.

\begin{lemma}\label{lem:same_torsion}
    Let $\mathcal{X}$ be a $\tau$-exceptional sequence. Then $\Gen(\omega^{-1}(\mathcal{X})) = \FiltGen(\mathcal{X})$.
\end{lemma}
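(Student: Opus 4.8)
The plan is to prove the two inclusions separately, using the explicit description of the terms of $\mathcal{X}$ in terms of $\mathcal{Y} := \omega^{-1}(\mathcal{X})$ supplied by \cref{prop:TF_tes_bijection}. Writing $\mathcal{Y} = (Y_k,\dots,Y_n)$ and $M_i = \bigoplus_{j>i} Y_j$, the TF-ordering condition $Y_i \notin \Gen M_i$ places us in case (a) of \cref{def:Emap}, so that each term is an honest module $X_i = \Ebm_{M_i}(Y_i) = f_{M_i}(Y_i)$. The structural fact I would lean on throughout is the defining short exact sequence of the torsion pair $(\Gen M_i, (\Gen M_i)^\perp)$, namely
$$0 \to t_{\Gen M_i} Y_i \to Y_i \to X_i \to 0,$$
whose submodule lies in $\Gen M_i$ and whose quotient is $X_i = f_{M_i} Y_i$.

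For the inclusion $\FiltGen(\mathcal{X}) \subseteq \Gen(\bigoplus \mathcal{Y})$, I would first note that $\Gen(\bigoplus \mathcal{Y})$ is a torsion class because $\bigoplus \mathcal{Y}$ is $\tau$-rigid \cite[Thm.~5.10]{as81}. The sequence above exhibits each $X_i$ as a quotient of $Y_i$, hence $X_i \in \Gen Y_i \subseteq \Gen(\bigoplus \mathcal{Y})$. Since $\FiltGen(\mathcal{X})$ is by definition the smallest torsion class containing $X_k,\dots,X_n$, this inclusion follows at once.

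For the reverse inclusion I would argue by downward induction on $i$ that $Y_i \in \FiltGen(\mathcal{X})$; as $\FiltGen(\mathcal{X})$ is closed under quotients, this yields $\Gen(\bigoplus \mathcal{Y}) \subseteq \FiltGen(\mathcal{X})$. The base case $i = n$ holds since $M_n = 0$ forces $X_n = f_0(Y_n) = Y_n$. For the inductive step, assuming $Y_j \in \FiltGen(\mathcal{X})$ for all $j > i$, closure under quotients gives $\Gen M_i = \Gen(\bigoplus_{j>i} Y_j) \subseteq \FiltGen(\mathcal{X})$, so in particular $t_{\Gen M_i} Y_i \in \FiltGen(\mathcal{X})$; combining this with $X_i \in \FiltGen(\mathcal{X})$ and closure under extensions applied to the short exact sequence above yields $Y_i \in \FiltGen(\mathcal{X})$.

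I do not anticipate a serious obstacle: the argument is essentially a careful bookkeeping of one torsion-pair sequence per index. The only points that need genuine attention are verifying that the TF-ordering really does keep us in case (a) of \cref{def:Emap} (so that $X_i$ is a module quotient of $Y_i$ rather than a shifted object), and correctly identifying the torsion pair defining $f_{M_i}$ as $(\Gen M_i, (\Gen M_i)^\perp)$, so that the submodule $t_{\Gen M_i} Y_i$ lands in $\Gen M_i = \Gen(\bigoplus_{j>i} Y_j)$ and can be absorbed by the inductive hypothesis.
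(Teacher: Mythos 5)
Your proposal is correct and follows essentially the same route as the paper's own proof: the same short exact sequence $0 \to t_{M_i}Y_i \to Y_i \to X_i \to 0$ gives $X_i \in \Gen Y_i$ for one inclusion, and the same downward induction with closure under extensions gives the other. Your version merely makes explicit two points the paper leaves implicit, namely that the TF-ordering places each $Y_i$ in case (a) of Proposition~\ref{def:Emap} and that $\Gen(\bigoplus\omega^{-1}(\mathcal{X}))$ is a torsion class by \cite[Thm.~5.10]{as81}, so that minimality of $\FiltGen(\mathcal{X})$ applies.
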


\begin{proof}
    Write $\omega^{-1}(\mathcal{X}) = (Y_k,\ldots,Y_n)$, $\mathcal{X} = (X_k,\ldots,X_n)$, and for $k \leq i \leq n+1$ denote $M_i = \bigoplus_{j = i+1}^n Y_j$. For $k \leq i \leq n$, Proposition~\ref{prop:TF_tes_bijection} and \cref{def:Emap} say that there is a short exact sequence
    $$0 \rightarrow t_{M_i} Y_i \rightarrow Y_i \rightarrow X_i \rightarrow 0$$
    with $X_i = f_{M_i} Y_i \in M_i^\perp$. We see immediately that $X_i \in \Gen Y_i$, and so $\FiltGen(\mathcal{X}) \subseteq \Gen(\omega^{-1}(\mathcal{X}))$. For the reverse inclusion we use reverse induction on $k$. For $k = n$, we have $Y_n = X_n$. For $k < n$, we have that $t_{M_k}Y_k \in \FiltGen(\mathcal{X})$ by the induction hypothesis, so the short exact sequence implies that also $Y_k \in \FiltGen(\mathcal{X})$.
\end{proof}

\subsection{Mutation of $\tau$-exceptional sequences}

A $\tau$-exceptional sequence of length two will be referred 
to as a {\em $\tau$-exceptional pair}.
Following \cite{bhm},
a $\tau$-exceptional pair $(B,C)$ is {\em left regular} if
$C$ is projective or $C \not \in \P({}^\perp{\tau \Fbm_C(B)})$. Otherwise, it is called \emph{left irregular}.
A $\tau$-exceptional pair $(X,Y)$ is {\em right regular} 
if $\Fbm_Y(X) \in \P({}^\perp{\tau Y})$ or $Y \not \in \Gen \Fbm_Y(X)$. Otherwise it is {\em right irregular}.
 
For $U$ indecomposable in $\mods \Lambda \amalg \mods \Lambda[1]$, we denote
$$|U| = \begin{cases} U, & \text{if $U \in \mods\Lambda$}; \\ U[-1], & \text{if $U \in \mods\Lambda[1]$.}\end{cases}$$

For a $\tau$-exceptional pair $(M,N)$, we let
\begin{equation*}
\begin{split}
    N_+ &:= \begin{cases} N[1], & \text{if $N$ is projective;}\\ N, & \text{otherwise;} \end{cases}\\
    \end{split}
    \hspace{20mm}
    \begin{split}
    M^+ &:= \begin{cases} M[1], & \text{if $M$ is projective in $\J(N)$;}\\ M, & \text{otherwise.} \end{cases}\\
    \end{split}
\end{equation*}
Furthermore, we let 
$M_{N\uparrow} := \Fbm_{N_+}(M)$ and 
        $M^+_{N\uparrow} := \Fbm_{N}(M^+)$
and note that we have $|N_+| = N$ and $|M^+| = M$.

In \cite[Sect.~4]{bhm}, we introduced partially-defined mutually inverse ``mutation operators'' $\varphi$ (left mutation) and $\psi$ (right mutation) on the set of $\tau$-exceptional pairs. For $\tau$-tilting finite algebras,  all $\tau$-exceptional pairs are (left and right) ``mutable''; that is, every $\tau$-exceptional pair is in the domain of both $\varphi$ and $\psi$. 

For a left regular $\tau$-exceptional pair $(B,C)$, we have
\begin{equation}\label{eqn:left}\varphi(B,C) = (\lvert \Ebm_{B_{C\uparrow}}(C_+) \rvert, B_{C\uparrow}).\end{equation}
For a right regular $\tau$-exceptional pair $(X,Y)$, we have
\begin{equation}\label{eqn:right}\psi(X,Y) = (\Ebm_{X^+_{Y\uparrow}}(Y), \lvert X^+_{Y\uparrow}\rvert).\end{equation}
The explicit formulas for $\varphi$ and $\psi$ in the left/right irregular cases will not be needed in this paper.

\begin{definition} \label{def:imutation}
Left mutation in a wide subcategory $\W$ is denoted 
by $\varphi^{\W}$, and
for a $\tau$-exceptional sequence $(X_k, X_{k+1}, \dots, X_n)$, and $i \geq k$,
we define the {\em left $i$-mutation} $\varphi_i$ by replacing 
the pair $(X_i, X_{i+1})$ with $\varphi^{\J(X_{i+2}, \dots, X_n)}(X_i, X_{i+1})$.
{\em Right $i$-mutation} $\psi_i$ is defined similarly.
\end{definition}

Mutation preserves $\tau$-perpendicular categories, in the following sense.

\begin{proposition}\label{prop:same_J_mutation}\cite[Cor.~5.3]{bhm}
    Let $\mathcal{X}$ = $(X_k,\ldots,X_n)$ be a $\tau$-exceptional sequence and consider $k \leq i < n$.
    Then $\J(\varphi_i(\mathcal{X})) = \J(\mathcal{X}) =
         \J(\psi_i(\mathcal{X}))$.
\end{proposition}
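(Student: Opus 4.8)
The plan is to reduce the statement to the case of a single $\tau$-exceptional pair and then to exhibit both the pair $(X_i,X_{i+1})$ and its mutation as arising from one common underlying support $\tau$-rigid object, whose $\J$ they must therefore share. First I would exploit the recursive definition of $\J$ in \cref{def:defs}(1). Writing $\W = \J(X_{i+2},\dots,X_n)$, an iterated unfolding of the recursion gives
$$\J(X_i,X_{i+1},\dots,X_n) = \J_{\W}(X_i,X_{i+1}),$$
where the right-hand side denotes the relative invariant $\J_{\J_{\W}(X_{i+1})}(X_i)$ computed inside $\W$. Since $\varphi_i$ alters only the entries $X_i,X_{i+1}$ while leaving $X_{i+2},\dots,X_n$ (hence $\W$) and $X_k,\dots,X_{i-1}$ untouched, the equality $\J(\varphi_i(\mathcal X)) = \J(\mathcal X)$ will follow once I show $\J_{\W}(X_i,X_{i+1}) = \J_{\W}(\varphi^{\W}(X_i,X_{i+1}))$: applying the remaining $i-k$ steps of the recursion with the unchanged modules $X_{i-1},\dots,X_k$ then produces identical subcategories. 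As $\W$ is equivalent to the module category of a $\tau$-tilting finite algebra by \cref{bijections}(b), it suffices to prove the pair statement $\J(B,C) = \J(\varphi(B,C))$ in an arbitrary such module category.

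For the pair case I would apply \cref{lem:J_sum} twice to the common object $C_+\oplus C'$, where $C' := B_{C\uparrow} = \Fbm_{C_+}(B)$ is the second entry of $\varphi(B,C)$. By definition of $C'$ we have $\Ebm_{C_+}(C') = B$, and by the formula \eqref{eqn:left} the first entry $B'$ of $\varphi(B,C)$ satisfies $\Ebm_{C'}(C_+) = B^+$ with $|B^+| = B'$. Using \cref{lem:J_sum} once with base $C_+$ and once with base $C'$, together with $\J(C)=\J(C_+)$, I obtain
$$\J(B,C) = \J_{\J(C_+)}\bigl(\Ebm_{C_+}(C')\bigr) = \J(C'\oplus C_+) = \J_{\J(C')}\bigl(\Ebm_{C'}(C_+)\bigr) = \J(\varphi(B,C)),$$
the two middle expressions being $\J$ of the same object. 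Equivalently, via \cref{prop:TF_tes_bijection} one checks that $\omega^{-1}(B,C)$ and $\omega^{-1}(\varphi(B,C))$ are the two orderings of the single support $\tau$-rigid object $C_+\oplus C'$, so that \cref{prop:same_J} forces both pairs to have $\J$ equal to $\J(C_+\oplus C')$. This realizes (regular) mutation as reordering the underlying TF-ordered object, which is exactly what makes $\J$ invariant.

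The main obstacle is the bookkeeping for projective summands, where the operations $(-)_+$, $(-)^+$, and $|-|$ replace a projective module by its shift in $\mods\Lambda\amalg\mods\Lambda[1]$; one must verify that $\Ebm_{C_+}(C') = B$ and $\Ebm_{C'}(C_+) = B^+$ persist under these shifts and that \cref{lem:J_sum} applies to the shifted object. The same care is needed to treat the left/right irregular cases, whose mutation formulas differ from \eqref{eqn:left}--\eqref{eqn:right} but admit the analogous ``common underlying object'' description; this case analysis is where the bulk of the technical work lies. Finally, the identity $\J(\psi_i(\mathcal X)) = \J(\mathcal X)$ follows formally from the $\varphi$ case: since $\varphi_i$ and $\psi_i$ are mutually inverse and both defined on all $\tau$-exceptional sequences in the $\tau$-tilting finite setting, applying the established equality to $\psi_i(\mathcal X)$ yields $\J(\mathcal X) = \J(\varphi_i\psi_i(\mathcal X)) = \J(\psi_i(\mathcal X))$.
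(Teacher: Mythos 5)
Note first that the paper does not prove this proposition internally: it is imported verbatim from \cite[Cor.~5.3]{bhm}, with the explicit remark that the irregular mutation formulas ``will not be needed in this paper.'' Measured against the argument in \cite{bhm}, your regular case is correct and is essentially the right mechanism. The reduction to the pair case via the recursion in \cref{def:defs}(1) is valid, since $\J(\mathcal X)$ is determined by $X_k,\ldots,X_{i-1}$ together with $\J(X_i,\ldots,X_n) = \J_{\J_\W(X_{i+1})}(X_i)$, and $\varphi_i$ leaves the other entries untouched. Your chain $\J(B,C) = \J_{\J(C_+)}\bigl(\Ebm_{C_+}(C')\bigr) = \J(C_+\oplus C') = \J_{\J(C')}\bigl(\Ebm_{C'}(C_+)\bigr) = \J(\varphi(B,C))$ via two applications of \cref{lem:J_sum} is sound, and the shift bookkeeping you flag is genuinely a one-liner: $\J(C_+) = \J(C)$ and $\J_{\J(C')}(Q[1]) = \J_{\J(C')}(Q)$ for $Q$ projective in the ambient wide subcategory, because $\tau Q = 0$ makes the ${}^\perp \tau Q$ factor the whole category. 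One aside is imprecise: when $C$ is projective, $\omega^{-1}(B,C) = (\Fbm_C(B),C)$ involves $\Fbm_C$ rather than $\Fbm_{C_+}$, so $(B,C)$ and $\varphi(B,C)$ are \emph{not} literally the two TF-orderings of one common module; your direct $\J$-computation on the support object $C_+\oplus C'$ is the correct formulation, and the $\omega^{-1}$ reformulation should be dropped or restricted to the non-projective case.

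The genuine gap is the irregular case, which you defer with the bare assertion that the irregular formulas ``admit the analogous common underlying object description.'' That assertion is precisely what needs proof, and it is not an analogy one can wave at: left irregularity ($C$ non-projective with $C \in \P({}^\perp\tau\, \Fbm_C(B))$) is exactly the situation in which the recipe behind \eqref{eqn:left} breaks down and \cite{bhm} must define $\varphi$ by a different formula, so the invariance of $\J$ there requires a separate computation with formulas you do not have (and which this paper deliberately omits). Irregular pairs do occur even for $\tau$-tilting finite algebras --- the paper itself invokes \cite[Prop.~3.15]{bhm}, which bounds but does not exclude them. Worse, the gap is load-bearing for your final paragraph: deducing $\J(\psi_i(\mathcal X)) = \J(\mathcal X)$ from $\varphi_i\psi_i = \mathrm{id}$ applies the $\varphi$-statement to the pair inside $\psi_i(\mathcal X)$, which may be left irregular even when $(X_i,X_{i+1})$ is right regular; alternatively, the symmetric chain for \eqref{eqn:right} (with $D := \Fbm_Y(X^+)$ and common object $Y \oplus D$) handles only right regular pairs. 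So as written, your argument establishes the proposition only when the relevant pairs are regular; completing it requires the irregular case formulas of \cite{bhm} and a direct verification there.
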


\subsection{Gen-minimal $\tau$-rigid modules}

For the proof of our main result, we will need some further results on
$\Gen$-minimal $\tau$-rigid modules, so we recall additional results from
\cite{bhm}, and point out some easy consequences.

\begin{lemma}\cite[Lem.~1.12]{bhm}\label{lem:proper_containment}
    Let $U \neq V$ be indecomposable $\tau$-rigid modules and suppose that $V \in \Gen U$. Then $\Gen V \subsetneq \Gen U$.
\end{lemma}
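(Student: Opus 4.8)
The plan is to prove the two halves of the strict inclusion separately: first the containment $\Gen V \subseteq \Gen U$, which is routine, and then the strictness $\Gen V \neq \Gen U$, which carries the actual content.

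For the containment I would use only that $\Gen U$ is closed under direct sums and under factor objects. Since $V \in \Gen U$, every module in $\add V$ (being a summand of a finite direct sum of copies of $V$) lies in $\Gen U$, and any factor object of such a module again lies in $\Gen U$. As $\Gen V$ is by definition the closure of $\add V$ under factor objects, this gives $\Gen V \subseteq \Gen U$ at once.

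For the strictness I would argue by contradiction, assuming $\Gen V = \Gen U$, and then invoke the classification in \cref{bijections}(a). The key observation is that an indecomposable module is automatically gen-minimal: its only proper direct summand is the zero module, and $\Gen 0 = \{0\} \subsetneq \Gen U$ since $U$ is nonzero and lies in $\Gen U$. Hence both $U$ and $V$ are gen-minimal $\tau$-rigid modules. Under the standing assumption that $\Lambda$ is $\tau$-tilting finite, \cref{bijections}(a) tells us that the assignment $M \mapsto \Gen M$ is a bijection from gen-minimal $\tau$-rigid modules to torsion classes, hence in particular injective; so $\Gen U = \Gen V$ would force $U = V$, contradicting $U \neq V$. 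Therefore $\Gen V \subsetneq \Gen U$.

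The argument is short precisely because it offloads the work onto the classification bijection, so the only points to verify are the reduction steps: that indecomposability yields gen-minimality, and that the $\tau$-tilting finite hypothesis is in force so that \cref{bijections}(a) applies. If one instead wanted a proof independent of the full classification, the main obstacle would be to show directly that an indecomposable $\tau$-rigid module $M$ is split projective in its own torsion class $\Gen M$, equivalently that $\Pmods(\Gen M) = M$; granting this, equality of torsion classes again identifies $U = \Pmods(\Gen U) = \Pmods(\Gen V) = V$. Establishing this split-projectivity from scratch, rather than reading it off the bijection, is where the genuine difficulty would lie.
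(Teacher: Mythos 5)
Your proof is correct in the setting of this paper, but note that the paper gives no internal proof of this lemma at all: it is imported from \cite[Lem.~1.12]{bhm}, where it is proved for an \emph{arbitrary} finite-dimensional algebra, whereas your argument leans essentially on the standing $\tau$-tilting finiteness hypothesis through \cref{bijections}(a). Both halves of your argument check out: the containment $\Gen V\subseteq\Gen U$ is as routine as you say, indecomposable modules are indeed gen-minimal (their only proper summand is $0$), and injectivity of $M\mapsto\Gen M$ on gen-minimal $\tau$-rigid modules, with inverse $\T\mapsto\Pmods(\T)$, then forces $U=V$ if $\Gen U=\Gen V$ --- in fact your strictness step never uses $V\in\Gen U$, so you prove the stronger statement that $\Gen$ is injective on indecomposable $\tau$-rigid modules in the $\tau$-tilting finite case. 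What the classification-free route buys, and why the original source takes it, is both generality and logical hygiene: the gen-minimal bijection you invoke is itself developed in \cite{bhm} (compare \cref{thm:gen_min_characterization}) alongside basic lemmas of exactly this kind, so as a free-standing proof yours risks circularity at the level of the literature, even though it is a legitimate derivation from the toolkit this paper quotes. The elementary argument you suspect exists is short: if $\Gen V=\Gen U$ then $U\in\Gen V$, so there is an epimorphism $g\colon V^s\twoheadrightarrow U$ with $V^s,U\in\Gen U$; since $U$ is $\tau$-rigid, $\Hom(U,-)$ sends epimorphisms between modules in $\Gen U$ to epimorphisms (Auslander--Smal{\o}, cf.\ \cite{as81}), so $\mathrm{id}_U$ lifts along $g$, the epimorphism splits, and $U$ is a direct summand of $V^s$; Krull--Schmidt then gives $U\cong V$, a contradiction. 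This avoids the classification entirely and, contrary to your closing remark, does not require establishing that $\Pmods(\Gen U)=U$; relative projectivity of a $\tau$-rigid module over its own torsion class is the only input.
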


Recall that for any subcategory $\SC$, we have that ${}^\perp \SC$ is a torsion class.

\begin{theorem}\label{thm:gen_min_characterization}\cite[Thm.~2.14]{bhm}
    A $\tau$-rigid module $M$ is gen-minimal if and only if $M = \Pmods({}^\perp \J(M))$.
\end{theorem}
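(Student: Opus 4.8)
The plan is to reduce the statement to a single identity of torsion classes and then isolate the analytic heart. Set $N := \Pmods({}^\perp\J(M))$. Since ${}^\perp\J(M)$ is a torsion class (recall that ${}^\perp\SC$ is a torsion class for any subcategory $\SC$), \Cref{bijections}(a) shows that $N$ is gen-minimal and that $\Gen N = {}^\perp\J(M)$. This immediately yields the ``if'' direction: if $M = N$, then $M$ is gen-minimal. For the ``only if'' direction, assume that $M$ is gen-minimal. Then $M$ and $N$ are both gen-minimal, so by the injectivity of the map $M\mapsto\Gen M$ on gen-minimal modules (\Cref{bijections}(a)) it suffices to prove that $\Gen M = \Gen N = {}^\perp\J(M)$.

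The inclusion $\Gen M\subseteq{}^\perp\J(M)$ holds for every $\tau$-rigid $M$: if $X$ is a quotient of $M^r$ and $W\in\J(M)\subseteq M^\perp$, then any map $X\to W$ vanishes after precomposition with the epimorphism $M^r\twoheadrightarrow X$, and hence is itself zero. For the reverse inclusion, take $X\in{}^\perp\J(M)$ and consider the canonical sequence $0\to t_{\Gen M} X\to X\to f_M X\to 0$ for the torsion pair $(\Gen M, M^\perp)$. Here $t_{\Gen M} X\in\Gen M\subseteq{}^\perp\J(M)$, while $f_M X\in M^\perp$ and, being a quotient of $X$, also lies in the torsion class ${}^\perp\J(M)$. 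Thus $X\in\Gen M$ as soon as $f_M X = 0$, and so the desired equality $\Gen M = {}^\perp\J(M)$ follows once we establish
$$M^\perp\cap{}^\perp\J(M) = 0.$$

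This last vanishing is the main obstacle, and it is precisely where gen-minimality enters: it can fail for non-gen-minimal $M$ (for the hereditary type $A_2$ algebra and $M = P_1\oplus S_1$ one has $\J(M)=0$ but $M^\perp\neq 0$, so the redundant summand $S_1\in\Gen P_1$ collapses $\J(M)$ and leaves part of $M^\perp$ undetected). To prove it for gen-minimal $M$, I would argue by contradiction. A nonzero $Y\in M^\perp\cap{}^\perp\J(M)$ cannot lie in $\J(M)=M^\perp\cap{}^\perp\tau M$, since $\J(M)\cap{}^\perp\J(M)=0$ forces any such object to vanish; hence $\Hom(Y,\tau M)\neq 0$, and so $\Hom(Y,\tau M_i)\neq 0$ for some indecomposable summand $M_i$ of $M$. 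By Auslander--Reiten duality this produces a nonzero (stable) class in $\Ext^1(M_i,Y)$, i.e.\ a nonsplit extension of $M_i$ by $Y$. The plan is then to play this extension against gen-minimality: the condition that no summand of $M$ is redundant (\Cref{lem:proper_containment}), together with the fact that $\J(M)$ has the full expected rank $n-\delta(M)$ (\Cref{bijections}(b)), should force a nonzero map from $Y$ into $\J(M)$ — contradicting $Y\in{}^\perp\J(M)$. Concretely I expect to realise such a map via the minimal left $\J(M)$-approximation of $Y$, using the Bongartz completion $M\oplus B$ of $M$ and \Cref{lem:proj_bij} (which identifies the projectives of $\J(M)$ with $f_M$ applied to the summands of $B$) to show that this approximation is nonzero. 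Making this approximation argument precise, and in particular pinning down exactly how gen-minimality excludes the degenerate behaviour exhibited in the non-gen-minimal example, is the crux of the proof; the reduction in the first two paragraphs is routine once \Cref{bijections} is in hand.
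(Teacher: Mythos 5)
The first thing to note is that the paper does not actually prove this statement: it is imported verbatim from \cite[Thm.~2.14]{bhm}, so your proposal can only be judged on its own correctness, not against an in-paper argument. Your reduction is correct and cleanly executed: since ${}^\perp\J(M)$ is a torsion class, \cref{bijections}(a) shows that $N=\Pmods({}^\perp\J(M))$ is gen-minimal with $\Gen N={}^\perp\J(M)$; the ``if'' direction follows at once; and for ``only if'', injectivity of $M\mapsto\Gen M$ on gen-minimal modules reduces the theorem to the identity $\Gen M={}^\perp\J(M)$, which via the torsion pair $(\Gen M, M^\perp)$ is equivalent to the vanishing $M^\perp\cap{}^\perp\J(M)=0$. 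Your $A_2$ example correctly illustrates that this vanishing genuinely requires gen-minimality. (One caveat: leaning on \cref{bijections} ties the argument to the $\tau$-tilting finite setting, which is the standing assumption here, whereas the cited result is not so restricted; in general one would need functorial finiteness of ${}^\perp\J(M)$, which is not free.)

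The genuine gap is that this vanishing \emph{is} the substantive content of the theorem, and your proposal only gestures at it. Concretely: (i) the Auslander--Reiten duality step is imprecise, since $\Ext^1(M_i,Y)\cong D\overline{\Hom}(Y,\tau M_i)$ involves Hom modulo maps factoring through injectives, so $\Hom(Y,\tau M_i)\neq 0$ does not by itself yield a nonsplit extension; (ii) far more seriously, the plan to ``play the extension against gen-minimality'' to ``force a nonzero map from $Y$ into $\J(M)$'' contains no mechanism by which gen-minimality actually enters --- and since $Y\in{}^\perp\J(M)$ is the hypothesis, the minimal left $\J(M)$-approximation of $Y$ is zero \emph{by definition}, so the desired contradiction must come from some construction you have not specified, and the sketch as written risks circularity. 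A reformulation sharpens what is missing: the identity ${}^\perp\tau M\cap{}^\perp\J(M)=\Gen M$, valid for \emph{every} $\tau$-rigid $M$ (it is derived in the proof of \cref{lem:rigid_unique} from Jasso's ${}^\perp\tau M=(\Gen M)*\J(M)$), shows that your missing claim is exactly that gen-minimality of $M$ forces ${}^\perp\J(M)\subseteq{}^\perp\tau M$. As submitted, your proposal proves the ``if'' direction and correctly reduces the ``only if'' direction to this claim, but does not prove it; the hard half of the theorem remains open.
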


The following consequence is an important ingredient for the proof of our main result.

\begin{corollary}\label{cor:gen_min_unique}
    Let $\W$ be a $\tau$-perpendicular category and let $\mathcal{X}, \mathcal{X}' \in \tes(\W)$. If $\bigoplus \omega^{-1}(\mathcal{X})$ and $\bigoplus \omega^{-1}(\mathcal{X}')$
    are both gen-minimal, then $\bigoplus \omega^{-1}(\mathcal{X}) = \bigoplus \omega^{-1}(\mathcal{X}')$.
\end{corollary}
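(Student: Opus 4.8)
We have a $\tau$-perpendicular category $\W$ and two $\tau$-exceptional sequences $\mathcal{X}, \mathcal{X}' \in \tes(\W)$, meaning $\J(\mathcal{X}) = \J(\mathcal{X}') = \W$. The associated TF-ordered $\tau$-rigid modules $\bigoplus\omega^{-1}(\mathcal{X})$ and $\bigoplus\omega^{-1}(\mathcal{X}')$ are both gen-minimal, and we must show they are equal as modules. The key available tools are Theorem~\ref{thm:gen_min_characterization}, which characterizes gen-minimal $\tau$-rigid modules $M$ as exactly $M = \Pmods({}^\perp\J(M))$, and Proposition~\ref{prop:same_J}, which tells us $\J(\mathcal{X}) = \J(\bigoplus\omega^{-1}(\mathcal{X}))$.

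**Plan.** Let me verify my reading of the excerpt: what does $\J(\mathcal{X}) = \W$ control? Write $M = \bigoplus\omega^{-1}(\mathcal{X})$ and $N = \bigoplus\omega^{-1}(\mathcal{X}')$. These are $\tau$-rigid modules (as the underlying modules of TF-ordered $\tau$-rigid modules). By Proposition~\ref{prop:same_J}, since $\mathcal{X} \in \tes(\W)$ we have $\J(M) = \J(\bigoplus\omega^{-1}(\mathcal{X})) = \J(\mathcal{X}) = \W$, and similarly $\J(N) = \W$. So the two gen-minimal $\tau$-rigid modules $M$ and $N$ have the same $\tau$-perpendicular category $\J(M) = \W = \J(N)$.

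**Finishing.** Now the proof is immediate from the gen-minimal characterization: since $M$ is gen-minimal, Theorem~\ref{thm:gen_min_characterization} gives $M = \Pmods({}^\perp\J(M))$, and since $N$ is gen-minimal, $N = \Pmods({}^\perp\J(N))$. Because $\J(M) = \W = \J(N)$, the right-hand sides coincide: ${}^\perp\J(M) = {}^\perp\J(N)$ as subcategories, so $\Pmods$ of these equal torsion classes agree. Hence $M = N$, which is exactly $\bigoplus\omega^{-1}(\mathcal{X}) = \bigoplus\omega^{-1}(\mathcal{X}')$.

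**Main obstacle.** Honestly, I expect no serious obstacle here — this corollary reads as a one-line deduction stitching together two prior results. The only thing I need to be careful about is the bookkeeping: confirming that $\bigoplus\omega^{-1}(\mathcal{X})$ is genuinely a $\tau$-rigid module to which Theorem~\ref{thm:gen_min_characterization} applies (it is, being the direct sum of an ordering of a TF-ordered $\tau$-rigid module), and that the identification $\J(\mathcal{X}) = \J(\bigoplus\omega^{-1}(\mathcal{X}))$ from Proposition~\ref{prop:same_J} is precisely what transports the hypothesis $\J(\mathcal{X}) = \W$ to the module side. An alternative route avoiding the $\Pmods$-characterization would be to invoke the moreover part of Lemma~\ref{lem:rigid_unique}: if I could show that $\J(M) = \J(N)$ alone forces equality of two gen-minimal modules, I would be done, but that lemma requires two of the three conditions, so the cleaner argument is the direct appeal to Theorem~\ref{thm:gen_min_characterization}, which produces $M$ and $N$ as literally the same canonical module built from $\W$.
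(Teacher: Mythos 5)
Your proof is correct and follows exactly the paper's own argument: both use Proposition~\ref{prop:same_J} to transfer $\J(\mathcal{X})=\W=\J(\mathcal{X}')$ to the equality $\J\bigl(\bigoplus\omega^{-1}(\mathcal{X})\bigr)=\J\bigl(\bigoplus\omega^{-1}(\mathcal{X}')\bigr)$, and then conclude via the characterization $M=\Pmods({}^\perp\J(M))$ of gen-minimal $\tau$-rigid modules from Theorem~\ref{thm:gen_min_characterization}. No gaps; your closing remark correctly notes that Lemma~\ref{lem:rigid_unique} alone would not suffice, which matches why the paper also routes through Theorem~\ref{thm:gen_min_characterization}.
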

\begin{proof}
Since $\mathcal{X},\mathcal{X}'\in \tes{\W}$, we have
$\J(\X)=\J(\X')=\W$. Hence $\J(\bigoplus\omega^{-1}\X)=\J(\bigoplus\omega^{-1}\X')$ by Proposition~\ref{prop:same_J}.
This implies that $\Pmods({}^\perp \J(\bigoplus\omega^{-1}\X))=\Pmods({}^\perp \J(\bigoplus\omega^{-1}\X'))$. The result now follows from Theorem~\ref{thm:gen_min_characterization}.
\end{proof}

\section{Transitivity in the \texorpdfstring{$\tau$}{tau}-tilting finite case}\label{sec2}
In this section we prove our main result, transitivity of mutation of complete $\tau$-exceptional sequences.
Suppose throughout this section that $\Lambda$ is $\tau$-tilting finite. We freely use the facts
that, in this case, every wide subcategory is $\tau$-perpendicular (recalled in Theorem~\ref{bijections}), and that every $\tau$-exceptional sequence is left and right $i$-mutable for any $i$~\cite[Cor. 0.4]{bhm}.
 
We give a more general formulation, where
we first fix a wide subcategory $\W$ and then consider the transitivity of mutation for 
all $\tau$-exceptional sequences $\X$ with $\J(\X) =\W $.
For $\W = 0$, this gives transitivity of mutation for 
all complete $\tau$-exceptional sequences.

\begin{lemma}\label{lem:corank_2}
    Let $\W$ be a $\tau$-perpendicular subcategory of rank $n-2$. Then $\J(\Pmods({}^\perp \W)) = \W$ and there are indecomposable $U$ and $V$, such that
    $\Pmods({}^\perp\W) = U \oplus V$. Moreover $U \oplus V$ is gen-minimal.
\end{lemma}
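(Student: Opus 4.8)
The plan is to write $N = \Pmods({}^\perp\W)$ and to treat the three assertions in turn, the equality $\J(N) = \W$ being the crux from which the rest follows. First I would record that, since ${}^\perp\W$ is a torsion class (as recalled just before \cref{thm:gen_min_characterization}), the module $N = \Pmods({}^\perp\W)$ is a gen-minimal $\tau$-rigid module by the torsion-class-to-module bijection of \cref{bijections}(a); this already yields the ``moreover'' assertion once we have produced the decomposition $N = U \oplus V$.

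The key step is to establish $\J(\Pmods({}^\perp\W)) = \W$ (for every wide $\W$, not only those of rank $n-2$). Consider the maps
$$g \colon \{\text{gen-minimal $\tau$-rigid modules}\} \to \{\text{wide subcategories}\}, \qquad M \mapsto \J(M),$$
$$h \colon \{\text{wide subcategories}\} \to \{\text{gen-minimal $\tau$-rigid modules}\}, \qquad \W \mapsto \Pmods({}^\perp\W),$$
both of which are well defined since $\J(M) = \J(M,0)$ is wide and $\Pmods$ of the torsion class ${}^\perp\W$ is gen-minimal. \cref{thm:gen_min_characterization} says precisely that $h(g(M)) = \Pmods({}^\perp\J(M)) = M$ for every gen-minimal $M$, so $h \circ g = \mathrm{id}$ and hence $g$ is injective. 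I emphasise that $g$ need not coincide with the bijection of \cref{bijections}(a); what I take from that theorem is only that, in the $\tau$-tilting finite case, the sets of gen-minimal $\tau$-rigid modules and of wide subcategories are finite and of equal cardinality. An injective map between finite sets of equal cardinality is bijective, so $g$ is a bijection with two-sided inverse $h$. Thus $g \circ h = \mathrm{id}$, which is exactly $\J(\Pmods({}^\perp\W)) = \W$.

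Finally I would read off the two remaining assertions. Since $\J(N) = \J(N,0) = \W$ and $\rank \W = n-2$ by hypothesis, the rank formula of \cref{bijections}(b) gives $n - \delta(N) = n-2$, so $\delta(N) = 2$; as all modules are basic, $N = U \oplus V$ for distinct indecomposables $U,V$. Combined with the first paragraph, this shows $U \oplus V = \Pmods({}^\perp\W)$ is gen-minimal. The main obstacle is the middle paragraph: \cref{thm:gen_min_characterization} only delivers the one-sided identity $h \circ g = \mathrm{id}$, and the real work is to promote this to $g \circ h = \mathrm{id}$, which is where the finiteness inherent to the $\tau$-tilting finite hypothesis, via the equinumerosity in \cref{bijections}(a), is indispensable.
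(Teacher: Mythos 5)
Your proof is correct, but it reaches the key identity $\J(\Pmods({}^\perp\W)) = \W$ by a genuinely different route from the paper. The paper simply cites \cite[Lem.~2.21]{bhm} for this equality (adding only the remark that the sincerity hypothesis in \cite[Lem.~2.11]{bhm} is not needed for the relevant part of its proof), then, as you do, obtains the decomposition $\Pmods({}^\perp\W) = U \oplus V$ from the rank formula in \cref{bijections}(b); for gen-minimality it appeals directly to the definition of split projectivity rather than to the bijection of \cref{bijections}(a), though your route via that bijection is equally valid. Your replacement for the external citation is an internal pigeonhole argument: \cref{thm:gen_min_characterization} supplies the one-sided identity $\Pmods({}^\perp\J(M)) = M$ on gen-minimal modules, hence injectivity of $M \mapsto \J(M)$, and the finiteness and equinumerosity of the sets of gen-minimal $\tau$-rigid modules and wide subcategories (extracted from \cref{bijections}(a), using the standing $\tau$-tilting finiteness assumption) upgrade this to the two-sided identity $\J(\Pmods({}^\perp\W)) = \W$. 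Your explicit caution that $M \mapsto \J(M)$ need not coincide with the diagonal map $M \mapsto \J(\Pmodns(\Gen M), P_{\Gen M})$ of \cref{bijections}(a) is exactly where a sloppier argument would go wrong, and the counting step neatly sidesteps having to identify the two maps. The trade-off: your argument is self-contained relative to the results quoted in this paper and even establishes the identity for wide subcategories of arbitrary rank, but it is intrinsically tied to $\tau$-tilting finiteness (harmless here, since that is assumed throughout), whereas the cited lemma of \cite{bhm} is proved directly, without routing through a cardinality count.
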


\begin{proof}
    The fact that $\J(\Pmods({}^\perp \W)) = \W$ follows from \cite[Lem.~2.21]{bhm}. Note that the part of the proof of~\cite[Lem.~2.11]{bhm} showing this does not need the sincerity assumption made in the statement. The decomposition then follows from Theorem~\ref{bijections}(b) and the assumption that $\W$ has rank $n-2$. 
    The last statement follows from the definition of split projectivity.
\end{proof}

\begin{lemma}\label{lem:corank2_1}
    Let $\W$ be a wide category of rank $n-2$. As in Lemma~\ref{lem:corank_2}, decompose $\Pmods({}^\perp \W) = U \oplus V$ with $U$ and $V$ indecomposable.
    Then there exists $\ell \in \mathbb{Z}$ such that $\varphi^\ell(\omega(U,V)) = \omega(V,U)$.
\end{lemma}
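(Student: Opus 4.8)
The plan is to work in the rank-$n-2$ wide category $\W$ and use the rank-two transitivity that is already available together with the structural results about $\tf(\W)$ and $\tes(\W)$. The key observation is that $\omega(U,V)$ and $\omega(V,U)$ both lie in $\tes(\W)$: indeed $(U,V)$ and $(V,U)$ are both orderings of the gen-minimal $\tau$-rigid module $U\oplus V = \Pmods({}^\perp\W)$, and by \cref{lem:corank_2} we have $\J(U\oplus V)=\W$, so by \cref{prop:same_J} their images under $\omega$ lie in $\tes(\W)$ (assuming both orderings are TF-orderings, which I address below). Since $\W$ has rank $n-2$, these are $\tau$-exceptional sequences of length two, i.e. $\tau$-exceptional pairs, living ``on top of'' $\W$; the whole problem is therefore essentially a rank-two problem relative to $\W$.

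\textbf{First step.} I would verify that both $(U,V)$ and $(V,U)$ are genuinely TF-orderings of $U\oplus V$, so that $\omega$ applies and lands in $\tes(\W)$. By \cref{lem:proper_containment}, for the indecomposable $\tau$-rigid summands $U,V$ of the gen-minimal module $U\oplus V$, neither lies in $\Gen$ of the other (otherwise gen-minimality would fail, since $\Gen(U\oplus V)$ would be generated by a proper summand). Hence $U\notin\Gen V$ and $V\notin\Gen U$, which is exactly the TF-ordering condition for both orderings. This also shows $\omega(U,V)$ and $\omega(V,U)$ are honest $\tau$-exceptional pairs.

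\textbf{Second step.} I would invoke the rank-two transitivity result. The wide subcategory $\W$ has rank $n-2$, so passing to $\mods\Lambda$ modulo $\W$ in the appropriate sense, or more precisely considering $\tau$-exceptional pairs whose associated $\J$ equals $\W$, reduces us to a situation of ``corank two,'' i.e. effectively rank two over $\W$. The introduction recalls that for $\tau$-tilting finite algebras of rank two, mutation of complete $\tau$-exceptional pairs is transitive (from \cite[Thm.~0.6]{bhm}). The content of the present lemma is to transport that fact to the relative setting over $\W$: since every $\tau$-exceptional pair with $\J$ equal to $\W$ corresponds, via $\omega^{-1}$ and \cref{prop:same_J}, to a TF-ordered $\tau$-rigid module $Y$ with $\J(\bigoplus Y)=\W$, and since the left-mutation operator $\varphi$ acts on the finite set $\tes(\W)$, transitivity of $\varphi$ on this set would give the desired $\ell$. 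I expect to set this up by identifying $\tes(\W)$ with the set of $\tau$-exceptional pairs of a rank-two $\tau$-tilting finite algebra (namely the algebra whose module category is the ``complement'' of $\W$), using \cref{bijections}(b), and then applying the known rank-two transitivity directly.

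\textbf{The main obstacle.} The hard part will be making precise the reduction from ``$\tau$-exceptional pairs $\X$ with $\J(\X)=\W$'' to ``complete $\tau$-exceptional sequences of a rank-two algebra,'' and checking that left mutation $\varphi$ in $\mods\Lambda$ restricted to $\tes(\W)$ corresponds to mutation in that rank-two setting. By \cref{prop:same_J_mutation}, $\varphi$ does preserve $\J(\X)=\W$, so $\varphi$ is a genuine permutation of the finite set $\tes(\W)$; the issue is to show this permutation is transitive, or at least that it carries $\omega(U,V)$ to $\omega(V,U)$. I would argue that $\tes(\W)$ is exactly the set of length-two $\tau$-exceptional sequences of the rank-two algebra determined by $\W$, and that the two distinguished elements $\omega(U,V),\omega(V,U)$ correspond to the two orderings of the co-Bongartz/gen-minimal generator there; transitivity of rank-two mutation then supplies $\ell$. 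A subtlety to watch is that $\varphi$ as defined globally (\cref{def:imutation}) is only \emph{left} mutation of the top pair, so I must confirm that iterating $\varphi$ stays within $\tes(\W)$ and eventually reaches $\omega(V,U)$ rather than merely some other element; here the finiteness of $\tes(\W)$ and the invertibility of $\varphi$ (with inverse $\psi$) are what guarantee that repeated application cycles through the orbit, so some power $\ell$ realizes the swap.
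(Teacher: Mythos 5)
Your first step is fine and agrees with the opening of the paper's proof: gen-minimality of $U\oplus V$ (supplied by \cref{lem:corank_2}) immediately gives $U\notin \Gen V$ and $V\notin \Gen U$, so both orderings are TF-orderings and $\omega(U,V),\omega(V,U)\in\tes(\W)$ by \cref{prop:same_J}. The core of your argument, however, has a genuine gap that your own ``main obstacle'' paragraph flags but does not close. There is no rank-two algebra, in this paper's toolkit or in the literature it cites, whose complete $\tau$-exceptional sequences are identified with $\tes(\W)$ compatibly with mutation. $\tau$-perpendicular reduction (\cref{bijections}(b), \cite[Thm.~3.8]{jasso}) produces module categories only on the $\J$-side, i.e.\ for $\W$ itself; the ``complement'' you invoke does not exist as a module category, since ${}^\perp\W$ is merely a torsion class. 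Your proposed identification is exactly the perpendicular-category argument of Crawley-Boevey in the hereditary case, and its failure for general algebras is the reason this lemma needs a direct proof: if $\tes(\W)$ were the set of complete sequences of a rank-two algebra with matching mutation, the main theorem would follow from \cite[Thm.~0.6]{bhm} by a short induction and most of Section~\ref{sec2} would be unnecessary. Your fallback is also insufficient: finiteness of $\tes(\W)$ together with invertibility of $\varphi$ (via \cref{prop:same_J_mutation}) shows only that $\varphi$ restricts to a permutation of the finite set $\tes(\W)$, hence that each element is periodic under $\varphi$; a permutation of a finite set can have many orbits, so periodicity of $\omega(U,V)$ says nothing about whether $\omega(V,U)$ lies in its orbit. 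That these two specific elements lie in one orbit is the entire content of the lemma and cannot be extracted from finiteness and invertibility alone.

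For comparison, the paper proves the lemma by a trichotomy on $\W$ plus an explicit descent, with no reduction to a smaller algebra. If $\W$ is sincere, at most one pair over $\W$ is left irregular \cite[Prop.~3.15]{bhm}, so after possibly exchanging $U$ and $V$ one may assume $\omega(V,U)$ is left regular, and the formula \eqref{eqn:left} gives $\varphi(\omega(V,U)) = (\lvert \Ebm_V(U)\rvert, V) = \omega(U,V)$ in a single step; the non-sincere, non-Serre case works the same way using \cite[Prop.~3.13]{bhm}. The genuinely hard case is $\W$ Serre, where $U \oplus V$ is projective and every pair over $\W$ is right regular: there one sets $(V_\ell,U_\ell) := \omega^{-1}(\psi^\ell(\omega(V,U)))$ and shows by induction that, as long as $U_\ell \neq V$, one has $V_{\ell+1} = U_\ell$ and $\Gen U_{\ell+1} \subsetneq \Gen U_\ell$ (via \cref{lem:proper_containment}); since $\Lambda$ is $\tau$-tilting finite, this strict descent of torsion classes must terminate, forcing $U_\ell = V$ for some $\ell$, whence $V_\ell = U$ by \cref{cor:uniqueness} and $\psi^\ell(\omega(V,U)) = \omega(U,V)$. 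To salvage your approach you would have to actually construct the rank-two model and verify mutation-compatibility, and the Serre case is an indication of why no such model is available. (A minor point in your first step: \cref{lem:proper_containment} is not needed there; $V \in \Gen U$ would directly give $\Gen(U\oplus V) = \Gen U$, contradicting gen-minimality.)
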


\begin{proof}
    Note first that $\J(U\oplus V) = \W$, where $U \oplus V$ is gen-minimal by Lemma~\ref{lem:corank_2}. Thus, both $(U,V)$ and $(V,U)$ are TF-ordered and hence $\omega(U,V), \omega(V,U) \in \tes(\W)$,
    using \cref{prop:same_J}.
    We now consider three separate cases.

    Case 1: Suppose that $\W$ is sincere, and so $U,V \not \in \P(\Lambda)$.  
    By \cite[Prop.~3.15]{bhm}, there is at most one left irregular $\tau$-exceptional pair 
    $(X,Y)$  with $\J(X,Y) = \W$. Hence, we can without loss of generality assume that $\omega(V,U)$ is left regular (otherwise we just exchange the roles of $U$ and $V$). Now since $U \notin \P(\Lambda)$ by assumption and $U \notin \Gen V$ by gen-minimality, by Equation~\eqref{eqn:left} we have that 
    \begin{equation*}
        \varphi({\omega(V,U)}) = \varphi{(\Ebm_U V,U)} = (|\Ebm_V(U)|, \Ebm^{-1}_U \Ebm_U V) =
        (\Ebm_V(U), V) =\omega(U,V).
    \end{equation*}

    Case 2: Suppose that $\W$ is not sincere and not a Serre subcategory. Then without loss of generality, we have $V \in \P(\Lambda)$ and $U \notin \P(\Lambda)$. Now since $\W$ is not sincere, $\omega(V,U)$ is left regular; see \cite[Prop.~3.13]{bhm}. As in Case 1, we have $U \notin \Gen V$ by gen-minimality and $U \notin \P(\Lambda)$ by assumption. So, $\varphi(\omega(V,U))$ is computed as in Case 1, and hence we have that $\varphi(\omega(V,U)) = \omega(U,V)$.

\smallskip
    
    Case 3: Suppose that $\W$ is a Serre subcategory, so $U \oplus V \in \P(\Lambda)$. In this case, every $\tau$-exceptional sequence in $\tes(\W)$ is right regular; see \cite[Prop.~3.14]{bhm}. Now for $\ell \in \mathbb{N}$, let $(V_\ell,U_\ell) := \omega^{-1}(\psi^\ell(\omega(V,U)))$.

    We first prove that there is $\ell \in \mathbb{N}$ such that $U_\ell = V$.
    Suppose, for a contradiction, that $U_\ell \neq V$ for all $\ell \in \mathbb{N}$.  We will show by induction on $\ell$ that: 
    \begin{itemize}
    \item[(I1)] $\Gen U_{\ell+1} \subsetneq \Gen U_\ell$,
    \item[(I2)] $V_{\ell+1} = U_\ell$, and
    \item[(I3)] $\Ebm_{U_{\ell+1}}(V_{\ell+1}) \in \P(\J(U_{\ell+1}))$ 
    \end{itemize}
     for all $\ell \in \mathbb{N}$. 

    We first consider the base case $\ell = 0$. In this case, we have $V_0 = V \in \P(\Lambda)$, and so $
        \Ebm_{U_0}(V_0) \in \P(\J(U_0))
    $ by \cref{def:Emap} and Lemma~\ref{lem:proj_bij}. Thus, by 
    Equation~\eqref{eqn:right} we have $$U_1 = |\Ebm_{U_0}^{-1}((\Ebm_{U_0}V_0)[1])|.$$ In particular, either 
    \begin{itemize}
        \item[(i)] $U_1 \in \P(\Lambda)$ and $U_0 \oplus U_1[1]$ is support $\tau$-rigid, or 
        \item[(ii)] $U_1 \in \Gen U_0$ and $U_1 = \Ebm^{-1}_{U_0}((\Ebm_{U_0}V_0)[1])$.
    \end{itemize}
    We consider Case (i) first. 
    We have $\W  = \J(U_1 \oplus V_1)$ by \cref{prop:same_J_mutation},
    and so $\W \subseteq \J(U_1) = U_1^\perp$. Since $\W = (U\oplus V)^\perp$, with $U,V, U_1$ indecomposable projectives, this implies $U_1 \in \add(U \oplus V)$. Since, by assumption, $U_1 \neq V$, we must have $U_1 = U$. But then (i) says that $U \oplus U[1]$ is support $\tau$-rigid, which is absurd. We conclude that we are in Case (ii). Then $U_1 \neq U_0$ since $U_0$ is not in the image of $\Ebm_{U_0}^{-1}$. Since (ii) says that $U_1 \in \Gen U_0$, it follows from Lemma~\ref{lem:proper_containment} that (I1) holds, that is: $\Gen U_1 \subsetneq \Gen U_0$. Then, using Equation~\eqref{eqn:right} for 
    $\psi$ yields 
    \begin{equation*}\label{eq:mut}
        (\Ebm_{U_1} V_1, U_1) = \psi(\Ebm_U V,U) = (\Ebm_{U_1} U ,U_1),
    \end{equation*}
    and so $V_1 = U= U_0$ since $\Ebm_{U_1}$ is a bijection. Thus (I2) holds.
    Finally, since $V_1 \in \P(\Lambda)$ and $U_1 \oplus V_1$ is $\tau$-rigid, also $V_1 \in \P({}^\perp \tau U_1)$. Thus $\Ebm_{U_1}(V_1) \in \P(\J(U_1))$ by Lemma~\ref{lem:proj_bij}, so also (I3) holds. This concludes the proof of the base case.

    Now suppose $\ell > 0$ and that the inductive claim holds for all $\ell' < \ell$. By the induction hypothesis, we have $\Ebm_{U_\ell} V_\ell \in \P(\J(U_\ell))$. As in the base case, the definition of $\psi$ then says that $U_{\ell+1} = |\Ebm_{U_\ell}^{-1}((\Ebm_{U_\ell}V_\ell)[1])|$. In particular, either 
    \begin{itemize}
     \item[(i)]  $U_{\ell+1} \in \P(\Lambda)$ and $U_\ell \oplus U_{\ell+1}[1]$ is support $\tau$-rigid, or
     \item[(ii)] $U_{\ell+1}\in \Gen U_\ell$ and $U_{\ell+1}= \Ebm_{U_\ell}^{-1}((\Ebm_{U_\ell}V_\ell)[1])$.
    \end{itemize}
    In Case (i), we would have $\J(U_{\ell+1} \oplus V_{\ell+1}) = \W$, so 
    $\J(U \oplus V) = \W \subseteq U_{\ell+1}^{\perp}$, with 
    $U,V, U_{\ell+1}$ indecomposable projectives. Since, by assumption,
    $U_{\ell+1} \neq V$, we must have  $U_{/ell+1} =  U$, and hence 
    $\Hom(U, U_{\ell}) =0$, since $U_{\ell} \oplus U_{\ell +1}[1]$ 
    is support $\tau$-rigid. But  
     the induction hypothesis says $U_\ell \in \Gen(U_{\ell-1}) \subseteq \Gen U$, so this is a contradiction.
     
     Thus we are in Case (ii), and moreover $U_{\ell+1} \neq U_\ell$ since $U_\ell$ is not in the image of $\Ebm_{U_\ell}^{-1}$. Since (ii) says that $U_{\ell+1} \in \Gen U_\ell$, it follows from Lemma~\ref{lem:proper_containment} that $\Gen U_{\ell+1} \subsetneq \Gen U_\ell$, so (I1) holds. Then, as in the base case the definition of $\psi$ yields $V_{\ell+1} = U_\ell$ (see \cref{eq:mut}), so (I2) holds.
     
     It remains then to show that also (I3) holds, that is $\Ebm_{U_{\ell+1}}(V_{\ell+1}) \in \P(\J(U_{\ell+1}))$.
Suppose, for a contradiction, that $\Ebm_{U_{\ell+1}}(V_{\ell+1}) \notin \P(\J(U_{\ell+1}))$. Then, by the definition of $\psi$, we have 
$$U_{\ell+2} = \Ebm_{U_{\ell+1}}^{-1}(\Ebm_{U_{\ell+1}}(V_{\ell+1})) = 
V_{\ell+1},$$  and so $U_{\ell+2}= V_{\ell+1} = U_\ell$. Moreover, $$\J(V_{\ell+2} \oplus U_{\ell+2}) = \W = \J(V_{\ell}\oplus U_{\ell})$$ by Propositions~\ref{prop:same_J_mutation} and~\ref{prop:same_J}, so Corollary~\ref{cor:uniqueness}
    implies that also $V_{\ell} = V_{\ell+2}$. Now since $\psi$ and $\omega$ are bijections, the equality $(V_{\ell+2},U_{\ell+2}) = (V_\ell,U_\ell)$ implies the equality $(V_{\ell+1},U_{\ell+1}) = (V_{\ell-1},U_{\ell-1})$. This gives us the contradiction $\Gen U_{\ell-1} = \Gen U_{\ell+1} \subsetneq \Gen U_\ell \subsetneq \Gen U_{\ell-1}$, where the last inclusion is by the induction hypothesis. So (I3) holds also for $\ell$.

The induction is complete, and we have shown that (I1), (I2) and (I3) hold for all $\ell\in\mathbb{N}$.
It follows that there is an infinite chain $\cdots \subsetneq \Gen U_2 \subsetneq \Gen U_1 \subsetneq \Gen U_0$. 
Since $\Lambda$ is $\tau$-tilting finite, there are only finitely many torsion classes in $\mods \Lambda$, all of which are functorially finite, by~\cite[Thm.\ 1.2 and Cor.\ 2.9]{dij}, so we have a contradiction.
We can conclude that there is  $\ell \in \mathbb{N}$ such that $U_\ell = V$.
    We have $\J(V_\ell \oplus U_\ell) = \W$ by Propositions~\ref{prop:same_J_mutation} and~\ref{prop:same_J}. Since $U_\ell = V$, Corollary~\ref{cor:uniqueness} then implies that $V_\ell = U$. 
    We conclude that $\psi^\ell(\omega(V,U)) = \omega(V_\ell,U_\ell) = \omega(U,V)$, or equivalently that $\varphi^{-\ell}(\omega(V,U)) = \omega(U,V)$, as required.
\end{proof}

The following application of the above Lemma generalizes \cite[Lem. 5.1]{bkt}.
      \begin{proposition}\label{prop:TF_transpose} 
      Suppose that we have TF-orderings
    $$\mathcal{Y} = (Y_k,\ldots,Y_{i-1},Y_i,Y_{i+1},\ldots ,Y_n) \text{   and   }
      \mathcal{Y}'= (Y_k,\ldots,Y_{i-1},Y_{i+1},Y_i,Y_{i+2},\ldots,Y_n) $$ 
      of some $\tau$-rigid module $\bigoplus\mathcal{Y}$.
      Then there exists $\ell \in \mathbb{Z}$ such that $\varphi_i^\ell(\omega(\mathcal{Y})) = \omega(\mathcal{Y}').$ 
\end{proposition}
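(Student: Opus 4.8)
The plan is to localize the whole problem to positions $i$ and $i+1$ and then invoke the rank-two transitivity of Lemma~\ref{lem:corank2_1} inside the wide subcategory cut out by the common tail. First I would check that $\omega(\mathcal{Y})$ and $\omega(\mathcal{Y}')$ agree outside positions $i$ and $i+1$. Writing $\omega(\mathcal{Y}) = (X_k,\ldots,X_n)$ and recalling that $X_j = f_{\oplus_{l>j}Y_l}Y_j$, for $j \geq i+2$ the tail $\bigoplus_{l>j}Y_l$ is literally unchanged by the swap, while for $j \leq i-1$ the module $\bigoplus_{l>j}Y_l$ is unchanged as an object (only the internal order of $Y_i,Y_{i+1}$ is altered, and $f_{(-)}$ depends only on the generated torsion class). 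Hence $X_j$ coincides for both orderings whenever $j \notin\{i,i+1\}$. Since, by Definition~\ref{def:imutation}, $\varphi_i$ replaces only the pair in positions $i,i+1$ via $\varphi^{\W'}$ with $\W' := \J(X_{i+2},\ldots,X_n)$ and fixes all other entries, it suffices to find $\ell\in\mathbb{Z}$ carrying the positions-$(i,i+1)$ pair of $\omega(\mathcal{Y})$ to that of $\omega(\mathcal{Y}')$.

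Next I would reduce both pairs to a rank-two configuration inside $\W'$. By Proposition~\ref{prop:same_J} we have $\W' = \J(N)$, where $N := \bigoplus_{j=i+2}^n Y_j$. Set $\overline{Y}_i := \Ebm_N Y_i$ and $\overline{Y}_{i+1} := \Ebm_N Y_{i+1}$; both are genuine indecomposable modules in $\W'$ by \cref{def:Emap}(a), since the TF-conditions force $Y_i,Y_{i+1}\notin\Gen N$. Using $X_{i+1} = f_N Y_{i+1} = \overline{Y}_{i+1}$ together with Lemma~\ref{lem:E_map_sum} applied to $\Ebm_{Y_{i+1}\oplus N}(Y_i)$, the positions-$(i,i+1)$ pair of $\omega(\mathcal{Y})$ equals $(\Ebm^{\W'}_{\overline{Y}_{i+1}}(\overline{Y}_i),\,\overline{Y}_{i+1})$, which is $\omega_{\W'}(\overline{Y}_i,\overline{Y}_{i+1})$ once we verify below that $(\overline{Y}_i,\overline{Y}_{i+1})$ is TF in $\W'$; symmetrically, the corresponding pair of $\omega(\mathcal{Y}')$ equals $\omega_{\W'}(\overline{Y}_{i+1},\overline{Y}_i)$.

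The key step is to verify that $\overline{Y}_i\oplus\overline{Y}_{i+1}$ is a gen-minimal $\tau$-rigid module of rank two in $\W'$ for which both orderings are TF; using $\Gen_{\W'}M = \Gen M\cap\W'$ from Remark~\ref{rem:allmodule}, this amounts to $\overline{Y}_i\notin\Gen\overline{Y}_{i+1}$ and $\overline{Y}_{i+1}\notin\Gen\overline{Y}_i$. I would establish the first via its contrapositive: if $\overline{Y}_i\in\Gen\overline{Y}_{i+1}$, then since $\overline{Y}_{i+1}\in\Gen Y_{i+1}$ (it is a quotient of $Y_{i+1}$) and $t_N Y_i\in\Gen N$, the short exact sequence $0\to t_N Y_i\to Y_i\to \overline{Y}_i\to 0$ has both outer terms in the torsion class $\Gen(Y_{i+1}\oplus N)$, whence $Y_i\in\Gen(Y_{i+1}\oplus N)$, contradicting that $\mathcal{Y}$ is TF. The other non-containment follows identically from $\mathcal{Y}'$ being TF. I expect this transfer of the gen-minimality/TF data through the reduction $\Ebm_N$ to be the main obstacle: note that the forward implication here is exactly Lemma~\ref{lem:gen_pass_down}, and the torsion-closure argument above supplies precisely the converse that is needed.

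Finally, since $\overline{Y}_i\oplus\overline{Y}_{i+1}$ is gen-minimal of rank two in $\W'$, its $\tau$-perpendicular $\J_{\W'}(\overline{Y}_i\oplus\overline{Y}_{i+1})$ has corank two in $\W'$ by Theorem~\ref{bijections}(b) (both summands being unshifted). Applying the relative versions of Lemma~\ref{lem:corank_2}, Theorem~\ref{thm:gen_min_characterization}, and Lemma~\ref{lem:corank2_1} inside $\W'$ then identifies $\overline{Y}_i\oplus\overline{Y}_{i+1}$ with the split-projective generator of the left perpendicular of that corank-two subcategory and yields $\ell\in\mathbb{Z}$ with $(\varphi^{\W'})^\ell\big(\omega_{\W'}(\overline{Y}_i,\overline{Y}_{i+1})\big) = \omega_{\W'}(\overline{Y}_{i+1},\overline{Y}_i)$. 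By the first paragraph, this is exactly the statement $\varphi_i^\ell(\omega(\mathcal{Y})) = \omega(\mathcal{Y}')$, which completes the proof.
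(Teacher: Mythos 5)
Your proposal is correct and follows essentially the same route as the paper's proof: both localize to the pair in positions $i,i+1$, reduce via $\Ebm_N$ (for $N=\bigoplus_{j\geq i+2}Y_j$) to identify the two pairs as $\omega_{\W'}$ of the two orderings of $\Ebm_N Y_i\oplus\Ebm_N Y_{i+1}$ inside $\W'=\J(N)$, and conclude by combining Theorem~\ref{thm:gen_min_characterization} with Lemma~\ref{lem:corank2_1} applied in $\W'$. The only (harmless) variation is that you verify the TF-conditions for both reduced orderings directly via an extension-closure argument in the torsion classes $\Gen(Y_{i+1}\oplus N)$ and $\Gen(Y_i\oplus N)$, whereas the paper obtains them from the bijectivity of $\omega_{\W'}$, both pairs being $\omega_{\W'}$-preimages of $\tau$-exceptional pairs.
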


\begin{proof}
Write $\omega(\mathcal{Y}) = (X_k,\ldots,X_n)$.  
Then, by Proposition~\ref{prop:TF_tes_bijection}, we have
$$\omega(\mathcal{Y}') = (X_k,\ldots,X_{i-1},Z_i,Z_{i+1},X_{i+2},\ldots,X_n),$$
for some modules $Z_i$ and $Z_{i+1}$.
For $k\leq j\leq n$, let $M_j=Y_{j+1}\oplus \cdots \oplus Y_n$.
To compute $Z_i, Z_{i+1}$, let $\V= \J(M_{i+1})$.
By Propositions~\ref{lem:E_map_sum} and \ref{prop:TF_tes_bijection} we have 
$$\Ebm_{M_i} = \Ebm_{Y_{i+1} \oplus M_{i+1}} = 
\Ebm^{\J(M_{i+1})}_{\Ebm_{M_{i+1}(Y_{i+1})}} \Ebm_{M_{i+1}}= \Ebm^{\V}_{X_{i+1}}\Ebm_{M_{i+1}}.$$
    It follows that $$Z_{i+1} = \Ebm_{M_{i+1}}(Y_i) = 
    \Ebm_{M_{i+1}}(\Ebm_{M_i})^{-1} (X_i) = \Ebm_{M_{i+1}} \Ebm_{M_{i+1}}^{-1} 
    (\Ebm_{X_{i+1}}^{\V})^{-1} (X_i) =  (\Ebm_{X_{i+1}}^{\V})^{-1} (X_i).$$
Applying Propositions~\ref{lem:E_map_sum} and \ref{prop:TF_tes_bijection} again, we have 
    \begin{align*}
    Z_i = \Ebm_{Y_i \oplus M_{i+1}}(Y_{i+1}) &=
    \Ebm_{Y_i \oplus M_{i+1}}\Ebm^{-1}_{M_{i+1}}(X_{i+1}) = 
    \Ebm^{\J(M_{i+1})}_{\Ebm_{M_{i+1}}(Y_i)} \Ebm_{M_{i+1}}\Ebm^{-1}_{M_{i+1}}
    (X_{i+1})\\ 
   &=\Ebm^{\V}_{Z_{i+1}} \Ebm_{M_{i+1}}\Ebm^{-1}_{M_{i+1}}
    (X_{i+1}) = \Ebm_{Z_{i+1}}^{\V}(X_{i+1}).    
    \end{align*}
    
    Thus, we have $\omega_{\V}(X_{i+1},Z_{i+1}) = (Z_i,Z_{i+1})$ and $\omega_{\V}(Z_{i+1},X_{i+1}) = (X_i,X_{i+1})$. It follows that $X_{i+1} \oplus Z_{i+1}$ is a gen-minimal $\tau_{\V}$-rigid module, since both orderings are TF-orderings. 
    Now Theorem~\ref{thm:gen_min_characterization} says that $X_{i+1} \oplus Z_{i+1} = \Pmods(\V \cap {}^\perp \J_\V(X_{i+1}\oplus Z_{i+1}))$. We therefore obtain the result by applying Lemma~\ref{lem:corank2_1} in the subcategory $\V$ with $\W = \J_\V(X_{i+1}\oplus Z_{i+1})$. 
\end{proof}

As a consequence of Proposition~\ref{prop:TF_transpose} we obtain the following.

\begin{corollary}\label{cor:TF_permutation}
    Let $\W$ be a wide subcategory. Let $r$ be the rank of $\W$,
    and let $\mathcal{X}, \mathcal{X}' \in \tes(\W)$. If $\bigoplus\omega^{-1}(\mathcal{X})$ and $\bigoplus\omega^{-1}(\mathcal{X}')$ are both gen-minimal, then $\mathcal{X}$ and $\mathcal{X}'$ are in the same orbit under $\varphi_{r+1},\psi_{r+1},\ldots,\varphi_{n-1},\psi_{n-1}$.
\end{corollary}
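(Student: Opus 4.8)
The plan is to reduce the statement to a combinatorial fact about orderings of the indecomposable summands of a single module, and then feed adjacent transpositions into Proposition~\ref{prop:TF_transpose}.

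First I would pin down the module. Since $\mathcal{X},\mathcal{X}'\in\tes(\W)$ and both $\bigoplus\omega^{-1}(\mathcal{X})$ and $\bigoplus\omega^{-1}(\mathcal{X}')$ are gen-minimal, Corollary~\ref{cor:gen_min_unique} gives $\bigoplus\omega^{-1}(\mathcal{X})=\bigoplus\omega^{-1}(\mathcal{X}')=:M$. By Proposition~\ref{prop:same_J} we have $\J(M)=\W$, so Theorem~\ref{bijections}(b) yields $\delta(M)=n-r$. Hence $\omega^{-1}(\mathcal{X})$ and $\omega^{-1}(\mathcal{X}')$ are two TF-orderings of the \emph{same} module $M$, occupying the positions $r+1,\ldots,n$, and they differ by a permutation of the $n-r$ indecomposable summands of $M$.

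The crux is the observation that, because $M$ is gen-minimal, every ordering of its indecomposable summands is a TF-ordering. Indeed, for an indecomposable summand $Y$ of $M$, gen-minimality applied to the proper summand $M\ominus Y$ forces $Y\notin\Gen(M\ominus Y)$: otherwise $\Gen M=\Gen(M\ominus Y)$, contradicting gen-minimality. Since the tail $\bigoplus_{j>i}Y_j$ of any ordering $(Y_{r+1},\ldots,Y_n)$ is a direct summand of $M\ominus Y_i$, we get $Y_i\notin\Gen(\bigoplus_{j>i}Y_j)$, which is exactly the TF condition at position $i$. This is the point that lets us transpose freely; without it one would have to analyze precisely which adjacent transpositions preserve the TF condition and argue connectivity inside that constrained set.

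Finally I would decompose the permutation relating the two orderings into adjacent transpositions, as any permutation in the symmetric group is a product of such. Each transposition swaps the summands in two adjacent positions $i$ and $i+1$ with $r+1\leq i\leq n-1$, and by the previous paragraph both the ordering before and the ordering after the swap are TF-orderings of $M$. Proposition~\ref{prop:TF_transpose} then supplies $\ell\in\mathbb{Z}$ with $\varphi_i^\ell$ carrying the $\omega$-image of one ordering to that of the other; since $\psi_i=\varphi_i^{-1}$, this move lies in the orbit under $\varphi_{r+1},\psi_{r+1},\ldots,\varphi_{n-1},\psi_{n-1}$. Composing the moves for the successive transpositions carries $\mathcal{X}$ to $\mathcal{X}'$, as required.
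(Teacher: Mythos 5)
Your proof is correct and takes essentially the same approach as the paper: identify the two modules via Corollary~\ref{cor:gen_min_unique}, note that gen-minimality makes every ordering a TF-ordering, and apply Proposition~\ref{prop:TF_transpose} to adjacent transpositions. You merely spell out two steps the paper leaves implicit --- the verification that every ordering of a gen-minimal module is a TF-ordering, and the decomposition of the relating permutation into adjacent transpositions (with $\psi_i=\varphi_i^{-1}$ covering negative exponents) --- both of which are fine.
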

\begin{proof}
    By Corollary~\ref{cor:gen_min_unique}, we have that $\bigoplus \omega^{-1}(\mathcal{X}) = \bigoplus \omega^{-1}(\mathcal{X}')$. Now the fact that this module is gen-minimal means that every ordering of its direct summands is a TF-ordering. Thus the result follows immediately from Proposition~\ref{prop:TF_transpose}.   
\end{proof}

Our approach to transitivity is now to show that every mutation orbit of $\tes(\W)$ contains a $\tau$-exceptional sequence for which the corresponding ordered TF-rigid module has a gen-minimal direct sum. This, combined with Corollary~\ref{cor:TF_permutation}, will give the main result.

\begin{lemma}\label{lem:max_gen_minimal}
    Let $\W$ be a wide subcategory. Let $r$ be the rank of $\W$, and let $\mathcal{O}$ be an orbit of $\tes(\W)$ under $\varphi_{r+1},\psi_{r+1},\ldots,\varphi_{n-1},\psi_{n-1}$. Then there exists $\mathcal{X} \in \mathcal{O}$ such that $\bigoplus \omega^{-1}(\mathcal{X})$ is gen-minimal.
\end{lemma}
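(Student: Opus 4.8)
The goal is to show that within any orbit $\mathcal{O}$ of $\tes(\W)$ under the mutations $\varphi_{r+1},\psi_{r+1},\ldots,\varphi_{n-1},\psi_{n-1}$, there is a representative $\mathcal{X}$ whose associated TF-ordered $\tau$-rigid module $\bigoplus\omega^{-1}(\mathcal{X})$ is gen-minimal. Since $\Lambda$ is $\tau$-tilting finite, the orbit $\mathcal{O}$ is finite, so the plan is to attach to each $\mathcal{X}\in\mathcal{O}$ a numerical invariant that measures ``how far from gen-minimal'' the module $\bigoplus\omega^{-1}(\mathcal{X})$ is, and then to show that whenever $\bigoplus\omega^{-1}(\mathcal{X})$ is \emph{not} gen-minimal, some single mutation $\varphi_i$ or $\psi_i$ (with $r+1\le i\le n-1$) strictly decreases this invariant. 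An element of $\mathcal{O}$ minimizing the invariant must then be gen-minimal.

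\textbf{Choosing the invariant.}
Write $\omega^{-1}(\mathcal{X}) = (Y_k,\ldots,Y_n)$ and recall that gen-minimality of $\bigoplus\mathcal{Y}$ fails precisely when some proper summand generates the same torsion class, equivalently (using Lemma~\ref{lem:proper_containment}) when some $Y_i\in\Gen(\bigoplus_{j\ne i}Y_j)$. By the TF-ordering property $Y_i\notin\Gen(\bigoplus_{j>i}Y_j)$, so failure of gen-minimality forces some $Y_i$ to lie in $\Gen(\bigoplus_{j\ne i}Y_j)$ using a summand $Y_{i'}$ with $i'<i$. The natural invariant is therefore built from the partial order on summands recorded by the containments $\Gen(\bigoplus_{j>i}Y_j)$; concretely I would use the tuple $(\Gen Y_n, \Gen(Y_{n-1}\oplus Y_n),\ldots)$ of torsion classes, or a weight such as $\sum_i \dim_\Bbbk \Gen(\bigoplus_{j\ge i}Y_j)$ that is monotone along the TF-filtration, and show that it strictly decreases. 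The key computational input is \cref{lem:gen_pass_down}, which controls how the $\Ebm$-maps (equivalently the mutation formulas \eqref{eqn:left}, \eqref{eqn:right}) interact with $\Gen$: if a non-gen-minimal overlap $Y_i\in\Gen(Y_{i+1}\oplus M_{i+1})$ occurs at adjacent positions, then after passing to $\J(M_{i+1})$ the relation $\Ebm_{M_{i+1}}Y_i\in\Gen(\Ebm_{M_{i+1}}Y_{i+1})$ persists, and \cref{lem:corank2_1} / Proposition~\ref{prop:TF_transpose} can be used to transpose $Y_i$ and $Y_{i+1}$ via a power of $\varphi_i$.

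\textbf{The reduction step.}
The crux is to produce, from any failure of gen-minimality, an \emph{adjacent} redundancy that a single $i$-mutation can act on. Given $Y_i\in\Gen(\bigoplus_{j\ne i}Y_j)$ with $i$ chosen maximal, I would locate the largest $i'>i$ contributing to this generation and argue, using the TF-ordering and repeated application of Proposition~\ref{prop:TF_transpose} to reorder the summands between positions $i$ and $i'$, that one may assume $i' = i+1$; this is where transitivity of the transposition moves established in Proposition~\ref{prop:TF_transpose} is essential, and it is the step that keeps us inside the orbit $\mathcal{O}$. With the redundancy made adjacent, \cref{lem:gen_pass_down} then shows $\Ebm_{M_{i+1}}Y_i\in\Gen(\Ebm_{M_{i+1}}Y_{i+1})$ in $\V=\J(M_{i+1})$, so swapping these two summands by $\varphi_i^{\pm\ell}$ (Proposition~\ref{prop:TF_transpose}) moves the ``large'' summand earlier in the TF-ordering and strictly shrinks the invariant.

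\textbf{Main obstacle.}
The hardest part will be the adjacency reduction: making precise that the generation relation $Y_i\in\Gen(\bigoplus_{j\ne i}Y_j)$ can always be witnessed after reordering by an \emph{adjacent} pair, while ensuring the chosen invariant genuinely decreases under the corresponding $\varphi_i$ and does not increase under the auxiliary transpositions used to achieve adjacency. Controlling the behavior of $\Gen$ of the intermediate sums under the $\Ebm$-maps — so that the transpositions needed to create adjacency do not themselves destroy the redundancy or raise the invariant — is the delicate bookkeeping, and will rely on the compatibility statements \cref{lem:E_map_sum}, \cref{lem:gen_pass_down}, and \cref{lem:proper_containment}. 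Once the strict decrease is established, finiteness of $\mathcal{O}$ (from $\tau$-tilting finiteness, via \cite[Thm.~1.2 and Cor.~2.9]{dij}) immediately yields the gen-minimal representative.
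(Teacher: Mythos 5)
There is a genuine gap, and it sits exactly at the step you flag as the crux. Your plan is to make the redundancy adjacent and then ``swap these two summands by $\varphi_i^{\pm\ell}$ (Proposition~\ref{prop:TF_transpose})''. But Proposition~\ref{prop:TF_transpose} requires \emph{both} orderings $(\ldots,Y_i,Y_{i+1},\ldots)$ and $(\ldots,Y_{i+1},Y_i,\ldots)$ to be TF-orderings, and at the adjacent redundancy this hypothesis fails by construction: the whole point is that $Y_{i+1}\in\Gen(Y_i\oplus M_{i+1})$, so $(Y_{i+1},Y_i,Y_{i+2},\ldots,Y_n)$ is \emph{not} TF-ordered, and indeed no TF-ordering of the same module puts $Y_{i+1}$ before $Y_i$. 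So no power of $\varphi_i$ realizes the transposition you need; the transposition moves are only available where gen-minimality is not being violated, i.e.\ they can create adjacency (this part of your argument matches the paper) but cannot then resolve the redundancy. The paper's resolution is different in kind: it applies a \emph{single} $\varphi_i$ at the non-TF adjacent position, which changes the underlying $\tau$-rigid module (one computes $X'_{i+1}=\Ebm_{M_{i+1}}(Y_i)$ via Equation~\eqref{eqn:left} and \cref{lem:gen_pass_down}), rather than permuting its summands.

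This also breaks your choice of invariant. A decreasing ``distance from gen-minimality'' is never constructed (note $\dim_\Bbbk\Gen(-)$ of a subcategory is not literally defined, and the tuple of torsion classes along the TF-filtration changes under the auxiliary transpositions, which is precisely the ``delicate bookkeeping'' you leave open). The paper instead \emph{maximizes} the single torsion class $\FiltGen(\mathcal{X})$, which by Lemma~\ref{lem:same_torsion} equals $\Gen(\bigoplus\omega^{-1}(\mathcal{X}))$ and hence is unchanged by the reordering moves of Proposition~\ref{prop:TF_transpose} --- this is exactly what makes the bookkeeping trivial. One then shows $\FiltGen(\varphi_i(\mathcal{X}))\supseteq\FiltGen(\mathcal{X})$ directly from the mutation formula, and strictness (which your sketch asserts but does not address) requires its own argument: if the torsion classes were equal, Lemma~\ref{lem:rigid_unique} would force the underlying modules of $\mathcal{X}$ and $\varphi_i(\mathcal{X})$ to coincide, contradicting the failure of the TF-condition. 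To repair your proof you would have to replace the transposition step by the genuine mutation $\varphi_i$ and replace your invariant by one constant on orderings of a fixed module --- at which point you have reconstructed the paper's argument with the monotonicity direction reversed.
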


\begin{proof}
    Choose $\mathcal{X} \in \mathcal{O}$ such that $\FiltGen(\mathcal{X})$ is maximized, in the sense that we do not have 
    $\FiltGen(\X') \subsetneq \FiltGen(\Y)$ for any $\X'$ in $\mathcal{O}$. Note that such an $\X$ exists because there are finitely many $\tau$-exceptional sequences for $\Lambda$, since $\Lambda$ is assumed to be $\tau$-tilting finite.
    We claim that $\bigoplus \omega^{-1}(\mathcal{X})$ is gen-minimal.
    Assume for a contradiction that it is not.

     Write $\omega^{-1}(\mathcal{X}) = (Y_k,\ldots,Y_n)$ and denote $\T := \FiltGen(\mathcal{X})$. By Lemma~\ref{lem:same_torsion}, we have $$\FiltGen(\X) =
     \Gen (Y_k \oplus\cdots \oplus Y_n),$$ with each $Y_i$ in $\P(\T)$ 
     (by~\cite[Prop.\ 2.9]{air}). 
By the assumption that $\bigoplus \omega^{-1}(\mathcal{X})$ is not gen-minimal, there exists an integer $r$ with $k\leq r\leq n$ such that $Y_r\in \Pmodns(\T)$.
Let $s$ be maximal such that $(Y_k,\ldots Y_{s-1},Y_r,Y_s,\ldots ,\hat{Y}_r,\ldots ,Y_n)$ is not TF-ordered (where the hat indicates omission). Note that, since $(Y_r,Y_k,Y_{k+1},\ldots ,\hat{Y}_r,\ldots ,Y_n)$ is not TF-ordered, such an $s\geq k$ exists, and we have $s\leq r-1$ since $(Y_{r+1},\ldots ,Y_n)$ is TF-ordered. Note also that, by the definition of $s$, we have that $$(Y_k,\ldots ,Y_{s-1},Y_s,Y_r,Y_{s+1},\ldots ,\hat{Y}_r,\ldots ,Y_n)$$ is TF-ordered.

By replacing $\X$ with $\omega(Y_k,\ldots ,Y_{s-1},Y_s,Y_r,Y_{s+1},\ldots ,\hat{Y}_r,\ldots ,Y_n)$
(and relabeling) we see that we can assume that $(Y_{i+1},Y_i,Y_{i+2},\ldots ,Y_n)$ is not TF-ordered for some $i$. By Proposition~\ref{prop:TF_transpose}, we can do this replacement without leaving the orbit $\mathcal{O}$ or changing the torsion class $\T$.

Now let $\mathcal{X}' = \varphi_i(\mathcal{X})$. To obtain a contradiction to the maximality of $\FiltGen(\mathcal{X})$, we will show that $\FiltGen(\mathcal{X}') \supsetneq \FiltGen(\mathcal{X})$. Before showing this, we first relate the terms of $\mathcal{X}$ and $\mathcal{X}'$ more explicitly.
     
Write $\mathcal{X}' = (X'_k,\ldots,X'_n)$
     and $\mathcal{X} = (X_k,\ldots,X_n)$, and let $M = M_{i+1}=Y_{i+2}\oplus \cdots \oplus Y_n$. By \cref{lem:E_map_sum} and the definition of the map $\omega$, we have
    \begin{align}
        \omega^{-1}_{\J(M)}(X_{i},X_{i+1}) &= \left(\left(\Ebm_{X_{i+1}}^{\J(M)}\right)^{-1}(X_{i}),X_{i+1}\right)\nonumber\\
        &=\left(\left(\Ebm_{X_{i+1}}^{\J(M)}\right)^{-1} \circ\Ebm_{M \oplus Y_{i+1}}(Y_{i}),\Ebm_M(Y_{i+1})\right)\nonumber\\
        &=\left(\left(\Ebm_{X_{i+1}}^{\J(M)}\right)^{-1} \circ\Ebm_{X_{i+1}}^{\J(M)} \circ \Ebm_{M}(Y_{i}),\Ebm_M(Y_{i+1})\right)\nonumber \\
        &= (\Ebm_M(Y_i),\Ebm_M(Y_{i+1})).\label{eqn:regular}
    \end{align}
Since $(Y_i,Y_{i+1},Y_{i+2},\ldots ,Y_n)$ is TF-ordered, $Y_i,Y_{i+1}\not\in \Gen M$, and     
since $(Y_{i+1},Y_i,Y_{i+2},\ldots,Y_n)$ is not TF-ordered, we have that $Y_{i+1} \in \Gen(Y_i\oplus M)$. Lemma~\ref{lem:gen_pass_down} (with $X=Y_{i+1}$, $Y=Y_i$ and $Z=M$) thus implies that
     \begin{equation}\label{eqn:gen}X_{i+1} = \Ebm_M(Y_{i+1}) \in \Gen \Ebm_M(Y_i),\end{equation}
     where the equality $X_{i+1} = \Ebm_M(Y_{i+1})$ is from~\eqref{eqn:regular}. This implies that $X_{i+1} \notin \P(\J(M))$ and, by \cite[Lem.~3.12]{bhm}, that $\mathcal{X}$ is left $i$-regular, 
     noting that the computation in \cref{eqn:regular} in particular gives
     that
     \begin{equation}\label{eq:from_comp}
     \left(\Ebm_{X_{i+1}}^{\J(M)}\right)^{-1}(X_{i}) = \Ebm_M(Y_i).    
     \end{equation}
     
     Using the formula for $\varphi_i$ in \cref{def:imutation} (see also
     Equation~\eqref{eqn:left}) yields
     \begin{equation}\label{eqn:Z}X'_{i+1} = \Ebm_M(Y_i).\end{equation} 

     We are now prepared to show that $\FiltGen(\mathcal{X}') \supseteq \FiltGen(\mathcal{X})$. First note that $X'_j = X_j$ for $j \notin \{i,i+1\}$ (since $\varphi_i$ can only change the terms in positions $i$ and $i+1$). Thus it suffices to show that $X_i \oplus X_{i+1} \in \Gen X'_{i+1}$. The fact that $X_{i+1} \in \Gen X'_{i+1}$ follows from~\eqref{eqn:gen} and~\eqref{eqn:Z}. The fact that $X_i \in \Gen X'_{i+1}$ follows from noting that $$X_i = \Ebm_{X_{i+1}}^{\J(M)} (X'_{i+1})$$
     by~\eqref{eq:from_comp} and~\eqref{eqn:Z}(and noting that $X_i$ is a module, so $X_i=f_{X_{i+1}}^{\J(M)}(X'_{i+1})$ by Proposition~\ref{def:Emap}).

    We have shown that $\FiltGen(\mathcal{X}') \supseteq \FiltGen(\mathcal{X})$. Suppose now for a contradiction that $\FiltGen(\mathcal{X}') = \FiltGen(\mathcal{X})$. Then $\Gen(\omega^{-1}(\mathcal{X})) = \FiltGen(\mathcal{X}) = \Gen(\omega^{-1}(\mathcal{X}'))$ by Lemma~\ref{lem:same_torsion} and $\J(\bigoplus \omega^{-1}(\mathcal{X})) = \J(\bigoplus \omega^{-1}(\mathcal{X}'))$ by Propositions~\ref{prop:same_J} and~\ref{prop:same_J_mutation}. By Lemma~\ref{lem:rigid_unique}, this means $\bigoplus \omega^{-1}(\mathcal{X}) = \bigoplus \omega^{-1}(\mathcal{X}')$.
    Denoting $\omega^{-1}(\mathcal{X}') = (W_k,\ldots,W_n)$, it follows that there exists some index $j$ such that $W_j = Y_{i+1}$. On the other hand, note that $W_\ell = Y_\ell$ for $i+1 < \ell$ and that $W_{i+1} = Y_i$ (using $\Ebm_M(Y_i)=X'_{i+1}=\Ebm_M(W_{i+1})$ from Proposition~\ref{prop:TF_tes_bijection} and Equation~\eqref{eqn:Z}). Thus $j \leq i$, but this contradicts the assumption that $(Y_{i+1},Y_i,Y_{i+2},\ldots,Y_n)$ is not TF-ordered. We conclude that $\FiltGen(\mathcal{X}') \supsetneq \FiltGen(\mathcal{X})$, which contradicts that maximality of $\mathcal{X}$. Therefore $\bigoplus \omega^{-1}(\mathcal{X})$ is gen-minimal.
\end{proof}

We are now prepared to prove our main result.

\begin{theorem}\label{thm:transitive_tau_tilting_finite}
    Let $\W$ be a wide subcategory and let $r$ be the rank of $\W$. Then any two $\tau$-exceptional sequences in $\tes(\W)$ are in the same orbit under $\varphi_{r+1},\psi_{r+1},\ldots,\varphi_{n-1},\psi_{n-1}$.
    In particular, letting $\W =0$, any two complete 
    $\tau$-exceptional sequences are in the same orbit under the action of  
    left and right mutations.
\end{theorem}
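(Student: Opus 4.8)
The plan is to deduce the theorem almost immediately from the two results that precede it, Corollary~\ref{cor:TF_permutation} and Lemma~\ref{lem:max_gen_minimal}, which between them isolate all the real content. The strategy has two halves. First, I would use Lemma~\ref{lem:max_gen_minimal} to show that every orbit of $\tes(\W)$ under $\varphi_{r+1},\psi_{r+1},\ldots,\varphi_{n-1},\psi_{n-1}$ contains a distinguished representative $\mathcal{X}$ for which the associated TF-ordered $\tau$-rigid module $\bigoplus\omega^{-1}(\mathcal{X})$ is gen-minimal. Second, I would invoke Corollary~\ref{cor:TF_permutation} to see that any two such distinguished representatives already lie in a single orbit. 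Combining these forces all orbits of $\tes(\W)$ to coincide, which is exactly transitivity.

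More precisely, fix two $\tau$-exceptional sequences $\mathcal{X},\mathcal{X}' \in \tes(\W)$, and let $\mathcal{O}$ and $\mathcal{O}'$ denote their orbits under the listed mutations. Note that these orbits remain inside $\tes(\W)$, since $\J$ is preserved under $i$-mutation by Proposition~\ref{prop:same_J_mutation}. Applying Lemma~\ref{lem:max_gen_minimal} to each orbit produces representatives $\mathcal{Z}\in\mathcal{O}$ and $\mathcal{Z}'\in\mathcal{O}'$ with $\bigoplus\omega^{-1}(\mathcal{Z})$ and $\bigoplus\omega^{-1}(\mathcal{Z}')$ gen-minimal. Since $\mathcal{Z},\mathcal{Z}'\in\tes(\W)$, Corollary~\ref{cor:TF_permutation} then shows that $\mathcal{Z}$ and $\mathcal{Z}'$ lie in the same orbit, whence $\mathcal{O}=\mathcal{O}'$ and in particular $\mathcal{X}$ and $\mathcal{X}'$ are in the same orbit. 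This proves the first assertion.

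For the ``in particular'' statement I would specialize to $\W=0$, so that $r=0$ and the available mutations are precisely $\varphi_1,\psi_1,\ldots,\varphi_{n-1},\psi_{n-1}$, that is, all left and right $i$-mutations. It then suffices to identify $\tes(0)$ with the set of complete (length-$n$) $\tau$-exceptional sequences: under $\omega$ these correspond to $\tau$-tilting modules $M$, and by Theorem~\ref{bijections}(b) the associated $\tau$-perpendicular category has rank $n-\delta(M)=0$, so $\J(\mathcal{X})=\J(\bigoplus\omega^{-1}(\mathcal{X}))=0$ by Proposition~\ref{prop:same_J}. The general statement therefore specializes to transitivity of left and right mutation on complete $\tau$-exceptional sequences.

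The genuine difficulty has already been absorbed into the two cited results, so the only obstacle at this final stage is conceptual bookkeeping: one must check that orbits never leave $\tes(\W)$ and that ``gen-minimality of $\bigoplus\omega^{-1}(\mathcal{X})$'' is exactly the invariant that bridges the \emph{existence} statement of Lemma~\ref{lem:max_gen_minimal} with the \emph{uniqueness-up-to-orbit} statement of Corollary~\ref{cor:TF_permutation}. The substantive work lives upstream in Lemma~\ref{lem:max_gen_minimal}, where one maximizes the generated torsion class $\FiltGen(\mathcal{X})$ along an orbit and argues that any non-gen-minimal representative admits an $i$-mutation strictly enlarging this torsion class; that monotonicity argument, rather than the present deduction, is the crux of the whole proof.
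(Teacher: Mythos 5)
Your proposal is correct and follows essentially the same route as the paper's own proof: obtain gen-minimal representatives in each orbit via Lemma~\ref{lem:max_gen_minimal}, then conclude with Corollary~\ref{cor:TF_permutation}; your added checks (orbits stay in $\tes(\W)$ by Proposition~\ref{prop:same_J_mutation}, and the identification of $\tes(0)$ with complete sequences) are details the paper leaves implicit but are accurate.
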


\begin{proof}
    Let $\mathcal{X}_1, \mathcal{X}_2 \in \tes(\W)$. By Lemma~\ref{lem:max_gen_minimal}, $\mathcal{X}_1$ can be mutated into some $\mathcal{X}'_1 \in \tes(\W)$ with $\bigoplus \omega^{-1}(\mathcal{X}'_1)$ gen-minimal. Likewise $\mathcal{X}_2$ can be mutated into some $\mathcal{X}'_2 \in \tes(\W)$ with $\bigoplus \omega^{-1}(\mathcal{X}'_2)$ gen-minimal. Finally, Corollary~\ref{cor:TF_permutation} implies that $\mathcal{X}'_1$ and $\mathcal{X}'_2$ are related by a sequence of mutations.
\end{proof}


\begin{thebibliography}{99}		
\bibitem{air}
T.~Adachi, O.~Iyama and I.~Reiten,
\emph{$\tau$-tilting theory},
Compos. Math. 150 (2015), no. 3, 415--452.

\bibitem{ASS06}
I.~Assem, D.~Simson, A.~Skowro\'{n}ski,
\emph{Elements of the representation theory of associative algebras. Vol. 1.
Techniques of representation theory},
London Math. Soc. Stud. Texts, 65
Cambridge University Press, Cambridge, 2006. x+458 pp.

\bibitem{as81}
M.~Auslander, and S.O.~Smalø,
\emph{Almost split sequences in subcategories},
J. Algebra 69 (1981), no. 2, 426--454.

 \bibitem{bon}
A.~Bondal, \emph{Representations of associative algebras and coherent sheaves}, Math. USSR, Izv. 34 (1990), 23--42.

\bibitem{bst}
T. Br\"{u}stle, D.~Smith and H.~Treffinger,
\emph{Wall and chamber structure for finite-dimensional algebras.}
Adv. Math. 354 (2019), 106746, 31 pp.

\bibitem{bkt}
A.~B.~Buan, M.~Kaipel, and H.~U.~Terland,
\emph{Mutating ordered $\tau$-rigid modules with applications to Nakayama algebras}, arXiv:2501.13694.

\bibitem{bh}
A.~B.~Buan and E.~J.~Hanson,
\emph{$\tau$-perpendicular wide subcategories}, Nagoya Math. J. 252 (2023), 959--984.

\bibitem{bhm}
A.~B.~Buan, E.~J.~Hanson, and B.~R.~Marsh, {\it Mutation of $\tau$-exceptional pairs and sequences}, arXiv:2402.10301.

\bibitem{bm2}
A.~B.~Buan and B.~R.~Marsh, {\it A category of wide subcategories}, Int. Math. Res. Not. IMRN 2021 (2019), no. 13, 10278--10338.
		
\bibitem{bm}
A.~B.~Buan and B.~R.~Marsh,
{\it $\tau$-exceptional sequences},
J. Algebra 585 (2021), 36-68.

\bibitem{cb}
W.~Crawley-Boevey, \emph{Exceptional sequences of representations of quivers}, Representations of algebras (Ottawa, ON, 1992), CMS Conf. Proc., vol. 14, Amer. Math. Soc., Providence, RI, 1993, pp. 117--124

\bibitem{dij} L.~Demonet, O.~Iyama, and G.~Jasso, 
\emph{$\tau$-tilting finite algebras, bricks, and 
$g$-vectors}, 
Int. Math. Res. Not. IMRN 2019 (2017), no. 3, 852--892

\bibitem{dirrt} 
L.~Demonet, O.~Iyama, N.~Reading, I.~Reiten and H.~Thomas, \emph{Lattice theory of torsion classes: Beyond $\tau$-tilting theory}, 
Trans. Amer. Math. Soc. Ser. B 10 (2023), 542--612. 

\bibitem{goru}
 A.~L.~Gorodentsev and A.~N.~Rudakov, \emph{Exceptional vector bundles on projective spaces},
 Duke Math. J. 54 (1987), no. 1, 115--130.

\bibitem{jasso}
G. Jasso,
Reduction of $\tau$-tilting modules and torsion pairs.
Int. Math. Res. Not. IMRN(2015), no.16, 7190-7237.

\bibitem{ms}
		F.~Marks and J.~\v{S}\v{t}ov\'{\i}\v{c}ek,
		\emph{Torsion classes, wide subcategories and localisations},
		Bull. London Math. Soc. 49 (2017), no. 3, 405--416.

\bibitem{mt}
H.~O.~Mendoza and H.~Treffinger,
\emph{Stratifying systems through $\tau$-tilting theory},
Doc. Math. 25 (2020), 701--720.

\bibitem{rin}
C.~M.~Ringel, \emph{The braid group action on the set of exceptional sequences of a hereditary Artin algebra}, Abelian Group Theory and Related Topics (Oberwolfach, 1993), Contemp. Math., vol. 171, Amer. Math. Soc., Providence, RI (1994) pp. 339--352. 


\end{thebibliography}
\end{document}